\let\counterwithout\relax
\def\gvertx#1{%
  \fill #1 circle (.1cm);}
\def\egvertx#1{%
  \filldraw[fill=white] #1 circle (.1cm);}
\newcommand{\IN}{\mathbb{N}}
\newcommand{\IZ}{\mathbb{Z}}
\newcommand{\IR}{\mathbb{R}}
\newcommand{\IC}{\mathbb{C}}
\newcommand{\IB}{\mathfrak{B}}
\newcommand{\diam}{\operatorname{diam}}
\newcommand{\id}{\operatorname{id}}
\newcommand{\ch}{\operatorname{ch}}
\DeclareMathOperator{\fin}{fin}
\newcommand{\fa}[1]{\forall_{#1}\quad}
\newtheorem{thm}{Theorem}[section]
\newtheorem*{thm*}{Theorem}
\newtheorem*{mainthm*}{Main Theorem}
\newtheorem{cor}[thm]{Corollary}
\newtheorem*{cor*}{Corollary}
\newtheorem*{maincor*}{Corollary}
\newtheorem{lem}[thm]{Lemma}
\newtheorem{prop}[thm]{Proposition}
\newtheorem*{question*}{Question}
\theoremstyle{definition}
\newtheorem{rem-alt}[thm]{Remark}
\newtheorem*{rem*}{Remark}
\newtheorem{ex-alt}[thm]{Example}
\newtheorem*{example*}{Example}
\newtheorem*{examples*}{Examples}
\newtheorem{defn-alt}[thm]{Definition}
\newenvironment{defn}    
{%
	\pushQED{\qed}\begin{defn-alt}}
	{\popQED\end{defn-alt}}
\popQED\end{ex-alt}}
\newenvironment{rem}    
{%
	\pushQED{\qed}\begin{rem-alt}}
	{\popQED\end{rem-alt}}
\numberwithin{equation}{section}
\def\blfootnote{\gdef\@thefnmark{}\@footnotetext}
\begin{document}

\title{Polynomially weighted $\ell^p$-completions and group homology}
\author{Alexander Engel\thanks{\href{mailto:alexander.engel@mathematik.uni-regensburg.de}{alexander.engel@mathematik.uni-regensburg.de}}
\and
Clara Löh\thanks{\href{mailto:clara.loeh@mathematik.uni-regensburg.de}{clara.loeh@mathematik.uni-regensburg.de}}
}
\date{\small{Fakult\"{a}t f\"{u}r Mathematik\\ Universit\"{a}t Regensburg\\ 93040 Regensburg, Germany}}
\maketitle

\begin{abstract}
  We introduce polynomially weighted $\ell^p$-norms on the bar complex of a finitely generated group. We prove that, for groups of polynomial or exponential growth, the homology of the completed complex does not depend on the value of~$p$ in the range $(1,\infty)$. 
\end{abstract}

\tableofcontents

\section{Introduction}

Let $G$ be a finitely generated group. In order to state our main result, we quickly introduce the main players in it.

The group homology $H_\ast(G;\IC)$ can be computed as the homology of
the bar complex~$C_*(G;\IC)$. Chains~$c \in C_k(G;\IC)$ are of the form
$c = \sum_{g \in G^{k}} a_g \cdot [e,g_1,\dots, g_k]$,
where only finitely many of the coefficients $a_g$ are non-zero. We
choose a finite generating set $S$ for $G$ to get a word-metric on
$G$. For~$n \in \IN$ and $p \in [1,\infty)$ we then define a weighted norm on
  $C_k(G;\IC)$ by $\|c\|_{n,p}^S := \bigl(\sum_{g\in G^k} |a_g|^p \cdot
  \diam_S(g)^n\bigr)^{1/p}$. We equip $C_k(G;\IC)$ with the
  family~$(\|-\|_{n,p}^S + \|\partial - \|_{n,p}^S)_{n\in \IN}$ of
  norms and denote the corresponding completion to a Fr\'echet space
  by~$C_k^p(G)$. The homology of the resulting chain
  complex~$C_*^p(G)$ is denoted by~$H_*^p(G)$.

\begin{mainthm*}[Theorem~\ref{thm:pqcompgeneral}, Proposition~\ref{prop:comppq}]
  Let $G$ be a finitely generated group of polynomial or exponential growth
  and let $p,q \in (1,\infty)$ with~$p < q$.
  Then the canonical homomorphism~$H_*^p(G) \longrightarrow H_*^q(G)$
  is an isomorphism.
\end{mainthm*}

\paragraph{Relation to the strong Novikov conjecture}
Let us explain why we are interested in a theorem like the one above. We first have to recall the following two notions.
Firstly, a group $G$ is called of \emph{type~$F_\infty$}, if it admits a model for its classifying space~$BG$
of finite type (i.e., a CW-complex that in each dimension has only finitely many cells).
Secondly, a group of type~$F_\infty$ is called \emph{polynomially contractible}, if its Dehn function and its higher-dimensional analogues are polynomially bounded. Note that this assumption on $G$ is not very strong: most of the groups that one would call non-positively curved (like hyperbolic groups, CAT(0)-groups, systolic groups or mapping class groups) are polynomially contractible. This follows from the fact that if a group is polynomially combable (i.e., combable with a uniform polynomial bound on the lengths of the combing paths), then it is polynomially contractible~\cite[End of 2nd paragraph on p.~257]{ji_ramsey}\cite[Prop.~3.4]{engel_BSNC}.

For hyperbolic groups any choice of geodesics in the Cayley graph will be a suitable combing, for CAT(0)-groups any choice of quasi-geodesics in the group following uniformly closely CAT(0)-geodesics in the underlying space will do the job, for systolic groups one can use the bi-automatic structure found by Januszkiewicz--\'{S}wi{\c{a}}tkowski~\cite[Thm.~E]{janus_swia} or the combing by Osajda--Przytycki~\cite{osajda_przytycki}, and an automatic structure on mapping class groups was provided by Mosher~\cite{mosher}. For a thorough compilation of polynomially contractible groups see the introduction of~\cite{engel_BSNC}.

\begin{maincor*}
  Let $G$ be a group of type $F_\infty$, and of polynomial or exponential growth,
  and let $G$ be polynomially contractible. 
If there exists some $p \in (1,\infty)$ such that the canonical homomorphism~$H^1_\ast(G) \to H^p_\ast(G)$ is injective, then the strong Novikov conjecture holds for $G$.
\end{maincor*}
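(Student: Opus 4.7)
The plan is to use the Main Theorem to free the injectivity hypothesis from any specific exponent, and then feed the resulting uniform injectivity into the Connes--Moscovici-style Novikov machinery for polynomially contractible groups developed in~\cite{engel_BSNC}.

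First I would invoke the Main Theorem: since the canonical maps $H^p_*(G)\to H^q_*(G)$ are isomorphisms for all $1<p<q<\infty$, the hypothesized injectivity of $H^1_*(G)\to H^{p_0}_*(G)$ for some single $p_0\in(1,\infty)$ propagates to injectivity of $H^1_*(G)\to H^p_*(G)$ for \emph{every} $p\in(1,\infty)$. Next, using polynomial contractibility together with type~$F_\infty$, every class in $H_*(G;\IC)$ can be represented by a polynomially bounded bar cycle whose boundary is again polynomially bounded (this is the homological shadow of the polynomial bounds on the higher Dehn functions obtained via polynomial combings, cf.~\cite{engel_BSNC}). The canonical map $H_*(G;\IC)\to H^1_*(G)$ is therefore an isomorphism, and the hypothesis of the Corollary translates into: $H_*(G;\IC)\to H^p_*(G)$ is injective for every $p\in(1,\infty)$.

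Now I would choose $p\in(1,\infty)$ so that the polynomially weighted $\ell^p$-completion of $\IC G$ sits as a smooth dense subalgebra $\mathcal{A}\subset C^*_r(G)$ closed under holomorphic functional calculus, so that $K_*(\mathcal{A})\cong K_*(C^*_r(G))$; for polynomial growth this is classical, while for exponential growth one relies on the construction in~\cite{engel_BSNC}. The chain complex $C^p_*(G)$ then models the continuous Hochschild, and hence via Connes' $B$-operator the continuous periodic cyclic, homology of~$\mathcal{A}$. A nonzero class $\beta\in K_*(BG)\otimes\IQ$ has nonzero Chern character $\ch(\beta)\in H_*(G;\IC)$, which by the injectivity above has nonzero image in $H^p_*(G)$. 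Continuous cyclic duality therefore produces a continuous cyclic cocycle $\tau$ on $\mathcal{A}$ pairing nontrivially with this image, and Connes' pairing of~$\tau$ with K-theory, combined with the identification $K_*(\mathcal{A})\cong K_*(C^*_r(G))$, forces $\mu(\beta)\neq 0$, where $\mu\colon K_*(BG)\to K_*(C^*_r(G))$ is the assembly map. Rational injectivity of~$\mu$ is the strong Novikov conjecture for~$G$.

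The main technical obstacle I anticipate is the identification of $C^p_*(G)$ with the continuous Hochschild complex of a smooth-dense subalgebra of $C^*_r(G)$, particularly in the exponential-growth case: here the polynomial weights that define $C^p_*(G)$ must be reconciled with the analytic conditions needed for holomorphic functional calculus, and the set of admissible exponents can be narrow. It is precisely at this point that the Main Theorem is indispensable, since one is free to check the homological injectivity at any convenient $p\in(1,\infty)$ and then transport it to the exponent dictated by the analytical framework.
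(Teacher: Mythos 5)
Your first two paragraphs track the paper's proof exactly: you invoke Theorem~\ref{thm:pqcompgeneral} to propagate injectivity of $H^1_*(G)\to H^p_*(G)$ to all $p\in(1,\infty)$, and you invoke Theorem~\ref{thm:p=1} to identify $H_*(G;\IC)\cong H^1_*(G)$. So far so good.

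The third paragraph, however, contains a genuine gap, and it is precisely at the point you yourself flag as ``the main technical obstacle.'' You propose to realize the polynomially weighted $\ell^p$-completion of $\IC G$ as a smooth dense, holomorphically closed subalgebra $\mathcal{A}\subset C^*_r(G)$ with $K_*(\mathcal{A})\cong K_*(C^*_r(G))$, and you assert that for exponential growth this is handled by the construction in~\cite{engel_BSNC}. That is not what~\cite{engel_BSNC} does, and for general groups of exponential growth no such $\mathcal{A}$ exists: constructing a polynomially-weighted dense spectral-invariant subalgebra of $C^*_r(G)$ is, for all practical purposes, the Rapid Decay property, which is independent of polynomial contractibility and fails for many $\operatorname{CAT(0)}$-groups in the class covered by the Corollary. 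Moreover, you are mixing two unrelated exponents: the $p$ in $C_*^p(G)$ weights the bar complex, while the $p$ in $B_r^pG$ (the completion of $\IC G$ in $\IB(\ell^pG)$) governs which K-theory the assembly map lands in; the Main Theorem gives you freedom in the former, not the latter, and gives you no help in producing a subalgebra of $C^*_r(G)$.

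The route the paper actually takes sidesteps holomorphic functional calculus entirely. It works directly with $K_k(B^p_rG)$ and uses the Chern-character-type map $K_k(B^p_rG)\to H^q_k(G)$ of~\cite[Prop.~5.1]{engel_BSNC}, which exists whenever $p\le q\cdot(k+2)/(k+1)$. In particular, for the Novikov-relevant case $p=2$, one needs $q > 2(k+1)/(k+2)$; since this is a number in $(1,2)$ depending on the degree~$k$, one \emph{a priori} only has injectivity of $H^1_k(G)\to H^q_k(G)$ for $q$ near~$1$, and the whole point of Theorem~\ref{thm:pqcompgeneral} is to slide that injectivity up to the $q$'s needed for the $p=2$ target. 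This is a diagram chase at the level of K-theory of $\ell^p$-group Banach algebras, not a cyclic-cohomology pairing with a smooth subalgebra of $C^*_r(G)$; that distinction is exactly why the argument survives for exponential growth groups without RD. To repair your proof you would have to replace the cyclic-cohomology step by this $B^p_rG$-diagram, at which point you recover the paper's argument.
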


\begin{proof}
The proof relies on the following diagram~\cite{engel_BSNC}: 
\[\xymatrix{
RK_k(BG) \ar[r] \ar[d]_{\ch_k} & K_k(B_r^p G) \ar@{-->}[d]\\
H_k(G;\IC) \ar[r] & H_k^1(G)
}\]
Here, $B_r^p G$ denotes the norm completion of $\IC G \subset \IB(\ell^p G)$ and the top horizontal map is the analytic assembly map. In the case $p=2$ we have $B_r^2 G = C_r^\ast G$, i.e., the reduced group $C^\ast$-algebra, and the \emph{strong Novikov conjecture} asserts that the analytic assembly map in this case (i.e., for $p=2$) is rationally injective.

Let $x \in RK_\ast(BG) \otimes \IC$ be any non-trivial element. Because the homological Chern character $\ch_\ast\colon RK_\ast(BG) \otimes \IC \to H_\ast(G;\IC)$ is an isomorphism, there is some~$k \in \IN$ such that $\ch_k(x) \in H_k(G;\IC)$ is non-zero. If $G$ is of type $F_\infty$ and polynomially contractible, then the canonical map $H_\ast(G;\IC) \to H_\ast^1(G)$ is an isomorphism~\cite[Corollary 4.4]{engel_BSNC}. (Analogous statements in the dual situation, i.e., for the corresponding cohomology groups, also hold~\cite{ogle_pol,ji_ramsey,meyer}.)
Further,  the right vertical map in the above diagram exists for all~$p \le (k+2) / (k+1)$. Hence, for such $p$ the element $x$ is not in the kernel of the analytic assembly map.

Since our goal is the strong Novikov conjecture, i.e., to show that the element $x$ is not in the kernel of the assembly map for $p=2$, we can try to go with the lower horizontal map to $H_k^q(G)$ for
some~$q >1$ instead of to~$H^1_k(G)$, i.e, we consider the new diagram
\[\xymatrix{
RK_\ast(BG) \ar[rr] \ar[d]_{\ch_k} & & K_\ast(B_r^p G) \ar@{-->}[d]\\
H_k(G;\IC) \ar[r] & H_k^1(G) \ar[r] & H_k^q(G)
}\]

Also in this case, we can construct the right vertical map for all $p \le q \cdot (k+2) / (k+1)$
(this can be shown as in previous work of the first named author~\cite[Proposition~5.1]{engel_BSNC}).
In particular, if $q$ is big enough, we can do it for $p=2$. We have already noted above that polynomial contractibility gives us that the canonical map $H_\ast(G;\IC) \to H_\ast^1(G)$ is an isomorphism. Using the main theorem, we see that if the canonical map~$H_\ast^1(G) \to H_\ast^p(G)$ is injective for some $p > 1$, then $H_\ast^1(G) \to H_\ast^q(G)$ will be injective for every $q \in (1,\infty)$. Hence our element $x$ is not in the kernel of the assembly map for the case $p=2$.
\end{proof}

Unfortunately, the hypotheses of this corollary are \emph{not} satisfied for all groups:
For example, for the free group~$F_2$ of rank~$2$, the canonical homomorphism~$H^1_1(F_2) \longrightarrow H^p_1(F_2)$ is trivial for all~$p \in (1,\infty)$ (Theorem~\ref{thm:f2vanishing}, Theorem~\ref{thm:p=1}).
In fact, we expect this vanishing result to hold in far greater generality, and thus this approach to the strong Novikov conjecture is not promising.

\paragraph{Questions}
Let us collect some open problems arising from the present paper. Since this seems to be the first time that such polynomially weighted $\ell^p$-completions of group homology are defined, there are many natural questions left open.
  \begin{itemize}
  \item
    Does Theorem~\ref{thm:pqcompgeneral}, i.e., the comparison in the range $(1,\infty)$, also hold for groups
    of intermediate growth?
  \item
    For which groups of superpolynomial growth does Theorem~\ref{thm:pqcompgeneral} also
    hold in the cases~``$p=1$'' or~``$q= \infty$''\;?
  \item
    For which groups~$G$ and which~$p \in (1,\infty]$ is the canonical
    map~$H^1_*(G) \longrightarrow H^p_*(G)$, resp.~the canonical map $H_*(G;\IC) \to H^p_*(G)$, injective?
  \item
    For which groups~$G$ and which~$p \in [1,\infty]$, $k \in \IN$ is~$H_k^p(G)$ non-trivial?
    How can such classes be detected? 
  \end{itemize}

\paragraph{Related work}
Though this seems to be the first time that these polynomially weighted $\ell^p$-completions of group homology are defined, there are of course similar things already in the literature:

\begin{itemize}
\item Bader, Furman and Sauer~\cite{bfs} investigate the comparison maps from ordinary homology and Sobolev homology, respectively, to the $\ell^1$-homology of any word hyperbolic group.
\item Nowak and {\v{S}}pakula~\cite{nowak_spakula} study coarse homology theory with prescribed growth conditions.
\item Weighted simplicial homology was studied by Dawson~\cite{weighted_simpl_complexes} and by Ren, Wu and Wu~\cite{ren_wu_wu}.
\item The dual situation to the one from the present paper, but only in the case of~$\ell^1$, i.e., group cohomology of polynomial growth, was studied by Connes and Moscovici~\cite{connes_moscovici} in relation with the strong Novikov conjecture, and further investigated by many others like Ji~\cite{ji}, Meyer~\cite{meyer} and Ogle~\cite{ogle_pol}.
\end{itemize}

\paragraph{Overview of this article}
Section~\ref{sec:lpnorms} introduces the polynomially weighted
$\ell^p$-versions of group homology in full detail and
discusses the case of groups of polynomial growth. In
Section~\ref{sec:comparison}, we establish the comparison theorem for
groups of exponential growth.  The vanishing result for the free group
is proved in Section~\ref{sec:vanishing}.

\paragraph{Acknowledgements}
The authors were supported by the SFB 1085 \emph{Higher Invariants} of the Deutsche Forschungsgemeinschaft DFG. The first named author was also supported by the Research Fellowship EN 1163/1-1 \emph{Mapping Analysis to Homology} of the DFG, and the DFG Priority Programme SPP 2026 \emph{Geometry at Infinity} (EN 1163/3-1 ``Duality and the coarse assembly map''). Moreover, we would like to thank the anonymous referee for providing helpful comments.

\section{Weighted \texorpdfstring{${\ell^p}$}{lp}-norms on group homology}\label{sec:lpnorms}

\subsection{Definition and basic properties}

\begin{defn}[weighted $\ell^p$-norms]
  Let $G$ be a finitely generated group endowed with a finite generating set~$S$, let $k \in \IN$, and let $p
  \in [1,\infty)$. For~$n \in \IN$ we define the \emph{$n$-weighted
      $\ell^p$-norm (with respect to~$S$)} by 
  \begin{align*}
    \|-\|_{n,p}^S \colon C_k(G;\IC) & \longrightarrow \IR_{\geq 0} \\
    \sum_{g \in G^{k}} a_g \cdot [e,g_1,\dots, g_k]
    & \longmapsto \biggl(\sum_{g\in G^k} |a_g|^p \cdot \diam_S(g)^n\biggr)^{1/p},
  \end{align*}
  where $\diam_S(g) := \diam_S\{e,g_1,\dots, g_k\}$ is the diameter with respect
  to the word metric~$d_S$ on~$G$.
  
  We then equip~$C_k(G;\IC)$ with the family~$(\|-\|_{n,p}^S +
  \|\partial - \|_{n,p}^S)_{n\in \IN}$ of norms and denote the
  corresponding completion to a Fr\'echet space by~$C_k^p(G)$. By
  construction, the boundary operator of~$C_k(G;\IC)$ extends continuously
  to~$C_k^p(G)$ and the homology of~$C_*^p(G)$ is called
  \emph{$\ell^p$-polynomially bounded homology of~$G$}, denoted
  by~$H_k^p(G)$.

  In the case of~$p = \infty$, we proceed in the same manner, using
  the \emph{$n$-weighted $\ell^\infty$-norms (with respect to~$S$)}, 
  defined by
  \begin{align*}
    \|-\|_{n,\infty}^S \colon C_k(G;\IC) & \longrightarrow \IR_{\geq 0} \\
    \sum_{g \in G^{k}} a_g \cdot [e,g_1,\dots, g_k]
    & \longmapsto \sup_{g \in G^k} |a_g| \cdot \diam_S(g)^n.
\qedhere
  \end{align*}
\end{defn}

\begin{rem}
  If $G$ is a finitely generated group, $k, n \in \IN$, and $p \in
  [1,\infty]$, then different finite generating sets~$S,T$ of~$G$ lead
  to equivalent (semi-)norms~$\|-\|_{n,p}^S$ and $\|-\|_{n,p}^T$
  on~$C_k(G;\IC)$. Therefore, the completions~$C_*^p(G)$ and the
  homology~$H_*^p(G)$ are independent of the choice of finite
  generating sets.
\end{rem}

\begin{rem}\label{rem:p<qmap}
  Let $G$ be a finitely generated group and let $p,q \in [1,\infty)$ with~$p < q$. Then
  the canonical inclusion~$C_*^p(G) \longrightarrow C_*^q(G)$ is
  contractive in the following sense: For every finite generating set~$S$ of~$G$
  and every~$n \in \IN$ the identity map~$C_*(G;\IC) \longrightarrow C_*(G;\IC)$ has norm
  at most~$1$ with respect to the norms~$\|-\|_{n,p}^S$ and $\|-\|_{n,q}^S$,
  respectively. In particular, we obtain a canonical induced map
  \[ H_*^p(G) \longrightarrow H_*^q(G).
  \]
  If $p \in [1,\infty)$ and $n \in \IN$, then
  \[ \fa{c \in C_k(G;\IC)} \|c\|^S_{n, \infty} \leq \|c\|_{\lceil n \cdot p\rceil,p}^S,
  \]
  which yields a canonical map~$H_*^p(G) \longrightarrow H_*^\infty(G)$.
\end{rem}

\subsection{The case \texorpdfstring{$p=1$}{p=1}}

It is already known that the canonical map~$H_*(G;\IC) \longrightarrow H_*^1(G)$ is an isomorphism for a large class of groups.

To state the corresponding theorem, we have to recall two notions.
Firstly, a group $G$ is called of \emph{type~$F_\infty$}, if it admits a model for its classifying space~$BG$
of finite type (i.e., a CW-complex that in each dimension has only finitely many cells).
Secondly, a group of type~$F_\infty$ is called \emph{polynomially contractible}, if its Dehn function and its higher-dimensional analogues are polynomially bounded.

Most of the groups that one calls non-positively curved (like hyperbolic groups, systolic groups, $\operatorname{CAT(0)}$-groups or mapping class groups) are polynomially contractible (see the introduction for references).

The following theorem has been proved (in variations) by different people in different ways~\cite{engel_BSNC,connes_moscovici,meyer,ogle_pol,ji_ramsey,JOR_B_bounded_coho}:

\begin{thm}\label{thm:p=1}
  Let $G$ be a group of type~$F_\infty$ that is polynomially contractible. Then
  the canonical map~$H_*(G;\IC) \longrightarrow H_*^1(G)$ is an isomorphism.
\end{thm}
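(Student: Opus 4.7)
The plan is to compare the bar resolution of~$\IC$ over~$\IC G$ with the $G$-equivariant cellular chain complex~$C_*^{\mathrm{cell}}(\widetilde{BG})$ of a finite type model of~$EG$, and to exhibit a chain homotopy equivalence between the two whose structure maps are polynomially bounded with respect to the word metric~$d_S$. Such maps extend continuously to the polynomially weighted $\ell^1$-completions and will then yield the desired isomorphism at the level of homology.

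First, using the $F_\infty$-hypothesis, I would fix a model~$\widetilde{BG}$ with only finitely many $G$-orbits of cells in each degree. Both~$C_*^{\mathrm{cell}}(\widetilde{BG})$ and the bar complex~$C_*(G;\IC)$ are free $\IC G$-resolutions of the trivial module~$\IC$, so by the fundamental lemma of homological algebra there exist $\IC G$-linear chain maps $\Phi \colon C_*^{\mathrm{cell}}(\widetilde{BG}) \to C_*(G;\IC)$ and $\Psi \colon C_*(G;\IC) \to C_*^{\mathrm{cell}}(\widetilde{BG})$, together with a chain homotopy~$h$ on the bar complex satisfying $\id - \Phi \circ \Psi = \partial h + h \partial$. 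The key point is to realise these maps so that they are polynomially controlled in coefficient size: for~$\Phi$ one writes down explicit representatives of the generating cells as bar chains of small diameter, while for~$\Psi$ and~$h$ one fills bar cycles inductively in dimension. Filling a bar boundary of diameter~$D$ by a chain whose own diameter (and combinatorial size) grows only polynomially in~$D$ is exactly what polynomial contractibility provides: the Dehn function handles the degree-two filling, and the higher-dimensional analogues handle all subsequent degrees.

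Once~$\Phi$, $\Psi$, and~$h$ are constructed with polynomial bounds, I would equip the cellular complex with the analogous polynomially weighted $\ell^1$-seminorms; since it is finitely generated over~$\IC G$ in each degree, this completion still computes~$H_*(G;\IC)$. The polynomial diameter bounds ensure that multiplication of coefficients by any polynomial weight, followed by summation, remains finite, so all three maps extend continuously with respect to every seminorm~$\|-\|_{n,1}^S + \|\partial-\|_{n,1}^S$. Passing to homology, $\Psi$ induces a two-sided inverse to the canonical map~$H_*(G;\IC) \to H_*^1(G)$ coming from~$\Phi$.

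The main obstacle is the construction of the polynomially bounded chain homotopy~$h$ between the two resolutions: this is precisely the place where polynomial contractibility \emph{in all dimensions} is genuinely used, and where one must carefully track how polynomial bounds compose when one assembles the filling data across degrees. The various sources cited in the statement implement this bookkeeping in slightly different languages---Connes--Moscovici and Meyer via bornological or Fr\'echet-algebraic frameworks, Ji and Ogle via combings and classifying space models, and \cite{engel_BSNC} via filling functions directly---but the essential analytic input in every case is the same family of polynomial filling inequalities.
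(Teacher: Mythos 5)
The paper itself does not prove Theorem~\ref{thm:p=1}; it records it as an established result with citations to \cite{engel_BSNC,connes_moscovici,meyer,ogle_pol,ji_ramsey,JOR_B_bounded_coho}. Your sketch faithfully captures the common strategy those references pursue: compare the bar resolution with a finite-type cellular $\IC G$-resolution of~$\IC$, use polynomial contractibility to produce $\IC G$-linear chain maps and homotopies with polynomial diameter and coefficient bounds, extend these continuously to the polynomially weighted $\ell^1$-completions, and pass to coinvariants. The one place you pass over real content is the step where the completed cellular resolution ``still computes $H_*(G;\IC)$'' together with the implicit compatibility of completion and coinvariants on the bar side; spelling this out (for instance via completed tensor products of Fr\'echet modules, or via an explicit contracting homotopy on $\widetilde{BG}$ that is itself polynomially bounded) is exactly the bookkeeping that the cited sources carry out in their respective frameworks, as you yourself indicate in your closing paragraph.
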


\begin{rem}
Without the assumption of polynomial contractibility, Theorem~\ref{thm:p=1} is likely false.

Ji, Ogle, and Ramsey provided groups whose comparison maps from bounded cohomology to ordinary cohomology fail to be injective or surjective, respectively~\cite[Sec.~6.4 \& 6.5]{JOR_B_bounded_coho}. Bounded cohomology, as they investigate it, is dual to our~$H_*^1(-)$ in the sense that it pairs with it (and this pairing is compatible with the usual pairing of homology with cohomology under the comparison maps). Please be also aware that their bounded cohomology is \emph{not} the one used by Gromov~\cite{gromov_vol}.

It seems therefore plausible that the groups of Ji, Ogle and Ramsey are also examples of groups for which the canonical map~$H_*(G;\IC) \longrightarrow H_*^1(G)$ is not injective or surjective, respectively. 
\end{rem}

\subsection{Comparison on groups of polynomial growth}

\begin{prop}\label{prop:comppq}
  Let $G$ be a finitely generated group of polynomial growth and let
  $p,q \in [1,\infty]$ with~$p < q$.
  \begin{enumerate}
    \item
      Then the inclusion~$C_*^p(G) \longrightarrow C_*^q(G)$ is bounded
      from below in the following sense: For every finite generating set~$S$
      of~$G$ and all~$k,n \in \IN$ there exist~$C \in \IR_{>0}$ and~$m \in \IN$ with
      \[ \fa{c \in C_k(G;\IC)} \|c\|_{m,q}^S \geq C \cdot \|c\|_{n,p}^S. 
      \]
    \item
      In particular, the canonical map~$H_*^p(G) \longrightarrow H_*^q(G)$
      (Remark~\ref{rem:p<qmap}) is an isomorphism.
  \end{enumerate}
\end{prop}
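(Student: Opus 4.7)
The plan is to establish part~(1) via a direct Hölder estimate that leverages the polynomial growth of $G$, and then to deduce part~(2) by observing that the resulting Fréchet topologies on the bar complex coincide.

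For the core estimate I would first fix $n \geq 1$ and $p, q \in [1,\infty)$, and let $d$ denote the degree of polynomial growth of $G$. Apply Hölder's inequality with conjugate exponents $q/p$ and $q/(q-p)$ to the factorisation
\[
|a_g|^p \diam_S(g)^n = \bigl(|a_g|^p \diam_S(g)^{mp/q}\bigr) \cdot \diam_S(g)^{n - mp/q},
\]
summed over those $g \in G^k$ with $\diam_S(g) \geq 1$. This produces
\[
\sum_{\diam_S(g) \geq 1} |a_g|^p \diam_S(g)^n \leq (\|c\|_{m,q}^S)^p \cdot \biggl(\sum_{\diam_S(g) \geq 1} \diam_S(g)^{(nq - mp)/(q - p)}\biggr)^{(q-p)/q}.
\]
Polynomial growth of degree $d$ gives $\#\{g \in G^k : \diam_S(g) \leq R\} \leq C' R^{dk}$, so the weight sum on the right converges as soon as $(nq - mp)/(q-p) < -dk - 1$, which holds for every sufficiently large integer~$m$. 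Taking $p$-th roots yields the desired bound $\|c\|_{n,p}^S \leq C \cdot \|c\|_{m,q}^S$; since for $n \geq 1$ the constant chain $[e,\ldots,e]$ contributes nothing to $\|c\|_{n,p}^S$, this completes the main case.

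The edge cases are handled by variants of the same idea. For $n = 0$, the constant-chain coefficient must be absorbed using $|a_{(e,\ldots,e)}| \leq \|c\|_{0,q}^S$ in combination with the Hölder bound above; for $q = \infty$, the pointwise estimate $|a_g| \leq \|c\|_{m,\infty}^S \cdot \diam_S(g)^{-m}$ (for $\diam_S(g) \geq 1$) directly yields
\[
\sum_{\diam_S(g) \geq 1} |a_g|^p \diam_S(g)^n \leq (\|c\|_{m,\infty}^S)^p \sum_{\diam_S(g) \geq 1} \diam_S(g)^{n-mp},
\]
with the second sum finite once $mp - n > dk + 1$. The boundary contribution $\|\partial c\|_{n,p}^S$ is bounded by running the same argument on $\partial c \in C_{k-1}(G;\IC)$; by choosing $m$ large enough to meet all convergence conditions simultaneously, a single $m$ controls both $\|c\|_{n,p}^S$ and $\|\partial c\|_{n,p}^S$.

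Part~(2) then follows formally. The inequalities of part~(1), together with the trivially contractive inclusion of Remark~\ref{rem:p<qmap}, show that in every degree the Fréchet seminorm families $(\|\cdot\|_{n,p}^S + \|\partial\cdot\|_{n,p}^S)_{n}$ and $(\|\cdot\|_{n,q}^S + \|\partial\cdot\|_{n,q}^S)_{n}$ are equivalent on $C_k(G;\IC)$. Hence $C_*^p(G)$ and $C_*^q(G)$ are canonically identified as Fréchet chain complexes with the same boundary operator, and their homologies agree. The main technical obstacle is ensuring that a single $m$ simultaneously handles the chain, its boundary, the constant-chain contribution in the case $n = 0$, and the limiting case $q = \infty$; once the admissible range of~$m$ is read off explicitly from the polynomial-growth bound, this amounts to elementary bookkeeping rather than a genuine difficulty.
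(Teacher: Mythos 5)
Your approach is the same as the paper's: Hölder's inequality separates off a weight function whose $\ell^{q'}$-norm (with $1/q + 1/q' = 1/p$) is made finite by polynomial growth. Your version with conjugate exponents $q/p$ and $q/(q-p)$ applied after raising to the $p$-th power is the same inequality in different clothing, and your convergence threshold $(nq - mp)/(q-p) < -dk - 1$ is exactly what the paper's explicit choice of $m$ arranges. You also write out the $q = \infty$ case, which the paper dispatches with ``one can proceed in a similar way.''

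Your caution about $n = 0$ is warranted and in fact points to a small flaw in the statement itself. Part~(1) as literally written fails for $n = 0$: no single $m$ handles both the constant simplex $[e,\dots,e]$ (which contributes to $\|c\|_{0,p}^S$ but to no $\|c\|_{m,q}^S$ with $m > 0$) and chains spread over many distant simplices (for which $m = 0$ gives no uniform lower bound, since $\|c\|_{0,q}^S / \|c\|_{0,p}^S \to 0$ when $c$ is the indicator of all $g$ with $\diam_S(g) \le R$ and $R \to \infty$). The paper's own weight $w_2(g) = \diam_S(g)^{n/p - m/q}$ is formally infinite at the constant simplex when $n = 0$, so its proof has the same blind spot. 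Your absorption $|a_{(e,\dots,e)}| \le \|c\|_{0,q}^S$ really produces the two-seminorm estimate $\|c\|_{0,p}^S \le \|c\|_{0,q}^S + C\,\|c\|_{m,q}^S$, which is not the single-$m$ form asserted in Part~(1) but is exactly what Part~(2) needs (equivalence of the two countable seminorm families, hence a Fréchet isomorphism). Be aware, though, that your closing remark that a single $m$ ``simultaneously handles \dots\ the constant-chain contribution in the case $n = 0$'' is the one place the bookkeeping does not and cannot close; the correct repair is the two-seminorm bound you already wrote down.
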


\begin{proof}
  \emph{Ad~1.}
  We first consider the case $q < \infty$.
  Let $D \in \IN$ be the polynomial growth rate of~$G$, let $S$ be a
  finite generating set of~$G$, and let $k, n \in \IN$. Then
  \[ m := \Bigl\lceil q \cdot \Bigl((k \cdot D+2) \cdot
           \Bigl(\frac1p - \frac1q\Bigr) + \frac np \Bigr)\Bigr\rceil
  \]
  has the desired property, as can be seen by the generalized H\"older
  inequality: Because $D$ is the polynomial growth rate of~$G$, there
  is a constant~$K \in \IR_{>0}$ with
  \[ \fa{r \in \IN_{>0}} \beta(r) := \bigl|B_e^{G,S}(r)\bigr| \leq K \cdot r^D.
  \]
  Moreover, because of~$q > p$, there is~$q' \in [1,\infty)$ with
  \[ \frac1q + \frac1{q'} = \frac1p.
  \]
  We now consider $c \in C_k(G;\IC)$ and the weight functions
  \begin{align*}
    w_1,w_2 \colon G^k & \longrightarrow \IR_{\geq0} \\
    w_1 : g & \longmapsto \diam_S(g)^{m/q}\\
    w_2 : g & \longmapsto \diam_S(g)^{n/p-m/q}.
  \end{align*}
  By definition, $\|c\|_{n,p}^S = \| c \cdot w_1 \cdot w_2\|_p$
  and $\|c\|_{m,q}^S = \|c \cdot w_1\|_q$ 
  (where
  ``$\cdot$'' denotes pointwise multiplication). Applying the
  generalized H\"older inequality, we hence obtain
  \begin{align*}
    \|c\|_{n,p}^{S}
    & \leq \| c \cdot w_1 \|_q \cdot \|w_2\|_{q'}
    = \| c \|_{m,q}^S \cdot \|w_2\|_{q'}
  \end{align*}
  and it remains to bound~$\|w_2\|_{q'}$ by a constant. The polynomial growth
  condition yields
  \begin{align*}
    \bigl(\|w_2\|_{q'}\bigr)^{q'}
    & = \sum_{g \in G^k} \diam(g)^{q' \cdot (\frac np - \frac mq)}
    \leq \sum_{r =1}^\infty \beta(r)^k
    \cdot r^{q' \cdot (\frac np - \frac mq)}
    \leq K^k \cdot \sum_{r=1}^\infty r^{k \cdot D}
    \cdot r^{q' \cdot (\frac np - \frac mq)}\\
    & \leq K^k \cdot \sum_{r=1}^{\infty} r^{-2}
    = K^k \cdot\zeta(2).
  \end{align*}

  In the case~$q = \infty$, one can proceed in a similar way
  (with~$m = \lceil 1/p \cdot (k \cdot D + n +2)\rceil$).
  
  \emph{Ad~2.} 
  By the first part, the identity map on the ordinary chain complex~$C_*(G;\IC)$ induces an
  isomorphism~$C_*^p(G) \longrightarrow C_*^q(G)$. 
  Hence, the claim follows.
\end{proof}

\subsection{Functoriality of weighted \texorpdfstring{$\ell^p$}{lp}-chain complexes}

\begin{defn}[polynomially controlled kernel]
  Let $G$ be a finitely generated group. 
  \begin{itemize}
  \item A subgroup~$H \subset G$ is \emph{polynomially controlled}, if
    for one (whence every) finite generating set~$S \subset G$ there
    exist $D \in \IN$ and $K \in \IR_{>0}$ such that
    \[ \fa{r \in \IN_{>0}}
    \bigl| B_r^{G,S}(e) \cap H \bigr|
    \leq K \cdot r^D.
    \]
  \item Let $G'$ be a group. 
    A group homomorphism~$\varphi \colon G \longrightarrow G'$ has
    \emph{polynomially controlled kernel} if the subgroup~$\ker \varphi$ of $G$
    is polynomially controlled in the sense above.
\qedhere
  \end{itemize}
\end{defn}

Clearly, all group homomorphisms with finite kernel have polynomially
controlled kernel as well as all group homomorphisms mapping out of
finitely generated groups of polynomial growth. 

\begin{rem}\label{rem_pol_controlled_fin_generated}
If $H$ is a polynomially controlled subgroup of a finitely generated group~$G$, then every finitely generated subgroup $K$ of $H$ has polynomial growth.

The reason for this is that the inclusion $K \to G$ does not increase lengths of elements if we choose a finite generating set $S$ of $G$ containing the chosen finite generating set $T$ of $K$ to define the word lengths. More concretely, we have for all $r \in \IN_{>0}$
\[
|B_r^{K,T}(e)| \le |B_r^{G,S}(e) \cap K| \le |B_r^{G,S}(e) \cap H|.
\]
We see that we even actually have that the growth rates of finitely generated subgroups of $H$ are uniformly bounded from above.
\end{rem}

\begin{lem}\label{lem:hyppolysub}
  Let $G$ be a hyperbolic group and $H$ a subgroup of $G$. 
  Then $H$ is a polynomially controlled subgroup if and only if $H$ is virtually cyclic.
\end{lem}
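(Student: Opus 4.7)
The statement is an ``if and only if'', and I would prove the two directions separately.

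For the direction \emph{virtually cyclic $\Rightarrow$ polynomially controlled}, my plan is to establish the stronger bound $|B_r^{G,S}(e) \cap H| = O(r)$, which gives polynomial control with exponent $D=1$. The key input is the classical fact that in a hyperbolic group every infinite-order element $g$ generates an undistorted cyclic subgroup: there exists $c>0$ with $d_S(e,g^n) \geq c \cdot |n|$ for all $n \in \IZ$. Hence $|B_r^{G,S}(e) \cap \langle g \rangle| = O(r)$. A virtually cyclic subgroup $H$ is either finite (trivial case) or decomposes as a finite union of cosets of some such $\langle g \rangle$; translating by any coset representative shifts the relevant range of exponents by a bounded amount, so the linear bound persists after summing over the finitely many cosets.

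For the direction \emph{polynomially controlled $\Rightarrow$ virtually cyclic}, my plan is to invoke the Tits alternative for hyperbolic groups (due to Gromov): every subgroup of a hyperbolic group is either virtually cyclic or contains a non-abelian free subgroup $F_2$. If $H$ were not virtually cyclic, we would find $F_2 \subset H$, and Remark~\ref{rem_pol_controlled_fin_generated} would force this finitely generated subgroup to have polynomial growth in its own word metric, contradicting the well-known exponential growth of $F_2$. Hence $H$ must be virtually cyclic.

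The main obstacle I anticipate is in the virtually cyclic direction, where one must bound the number of $H$-elements measured in the ambient word metric $d_S$, not in any intrinsic metric on $H$. This is precisely the point where undistortion of cyclic (and thus virtually cyclic) subgroups of hyperbolic groups is indispensable; without it, the growth of $H$ as a subset of $G$ could a priori be much larger than its intrinsic growth. The converse direction is essentially a one-line consequence of the Tits alternative combined with Remark~\ref{rem_pol_controlled_fin_generated}, so I expect no real difficulty there.
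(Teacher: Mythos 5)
Your proof is correct and takes essentially the same route as the paper: the forward direction uses Remark~\ref{rem_pol_controlled_fin_generated} together with the Tits alternative for hyperbolic groups (the paper cites Ghys for the fact that a subgroup without $F_2$ is virtually cyclic), and the reverse direction rests on the undistortion of cyclic subgroups, which is exactly the quasi-isometric embedding the paper cites from Bridson--Haefliger. Your coset decomposition is just a slightly more explicit version of the paper's ``without loss of generality $H \cong \IZ$'' reduction.
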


\begin{proof}
  Let $H$ be a polynomially controlled subgroup. By Remark~\ref{rem_pol_controlled_fin_generated} finitely
  generated subgroups of~$H$ are of polynomial growth. In particular,
  $H$ does not contain a free group of rank~$2$.  As the ambient
  group~$G$ is hyperbolic, this implies that $H$ is virtually
  cyclic~\cite[Corollaire on p.~224]{ghys}.
  
  Let $H$ be virtually cyclic. Without loss of generality we can assume that $H$ is isomorphic to $\IZ$. Then $H$ is quasi-isometrically embedded in $G$ \cite[Corollary III.$\Gamma$.3.10(1)]{bridson_haefliger} and hence a polynomially controlled subgroup of $G$.
\end{proof}

\begin{prop}\label{prop:polkernel}
  Let $p \in [1,\infty]$, let $G$ and $H$ be finitely generated
  groups, and let $\varphi \colon G \longrightarrow H$ be a group
  homomorphism with polynomially controlled kernel.
  Then the induced chain map~$C_*(\varphi;\IC) \colon C_*(G;\IC)
  \longrightarrow C_*(H;\IC)$ is continuous with respect to the
  weighted $\ell^p$-Fr\'echet topologies.
\end{prop}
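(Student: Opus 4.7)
The plan is to prove that, for every $k \in \IN$ and every $n \in \IN$, there exist $m \in \IN$ and $C > 0$ with
\[
\|C_k(\varphi;\IC)(c)\|_{n,p}^{S_H} \leq C \cdot \bigl( \|c\|_{0,p}^{S_G} + \|c\|_{m,p}^{S_G} \bigr)
\qquad \text{for all } c \in C_k(G;\IC).
\]
Combined with the chain-map identity $\partial \circ C_k(\varphi;\IC) = C_{k-1}(\varphi;\IC) \circ \partial$, such an estimate in every degree yields continuity with respect to the defining Fr\'echet families of semi-norms. Since the weighted topology does not depend on the chosen finite generating sets, I would first enlarge $S_H$ so that it contains $\varphi(S_G)$; then $\varphi$ is $1$-Lipschitz in the word metrics, so that for every $g$ in the fiber $F(h) := \{g \in G^k : \varphi(g_i) = h_i \text{ for every } i\}$ one has the crucial comparison $\diam_{S_H}(h) \leq \diam_{S_G}(g)$.

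The decisive geometric input is a uniform-in-$h$ fiber-counting bound. Using polynomial control of~$\ker\varphi$ with degree~$D$ and constant~$K$, a standard coset translation argument gives, for every $h \in H^k$ and every $R \geq 1$,
\[
\bigl|\{ g \in F(h) : \diam_{S_G}(g) \leq R\}\bigr| \leq C_1 \cdot R^{kD}.
\]
Concretely, for each coordinate, if the intersection $B_R^{G,S_G}(e) \cap \varphi^{-1}(h_i)$ is non-empty, then fixing a representative $g_{0,i}$ identifies it with a subset of the translate $g_{0,i} \cdot \bigl(B_{2R}^{G,S_G}(e) \cap \ker\varphi\bigr)$, which by polynomial control has at most $K \cdot (2R)^D$ elements. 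Abel summation, combined with the lower bound $\diam_{S_G}(g) \geq \diam_{S_H}(h)$, then promotes this count into the uniform estimate
\[
\sum_{g \in F(h)} \bigl(1 + \diam_{S_G}(g)\bigr)^{-m} \leq C_2 \cdot \bigl(1 + \diam_{S_H}(h)\bigr)^{kD + 1 - m}
\qquad \text{whenever } m > kD + 1.
\]

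The coefficient of $C_k(\varphi;\IC)(c)$ at $h$ is $b_h := \sum_{g \in F(h)} a_g$. For $p \in (1,\infty)$ I would apply H\"older's inequality with weight $v_g := (1 + \diam_{S_G}(g))^m$ and dual exponent $q' = p/(p-1)$, to obtain
\[
|b_h|^p \leq \biggl( \sum_{g \in F(h)} |a_g|^p v_g \biggr) \cdot \biggl( \sum_{g \in F(h)} v_g^{-q'/p} \biggr)^{p/q'}.
\]
Multiplying by $\diam_{S_H}(h)^n$, summing over $h$ (so that the double fiber sum collapses to the single sum over $G^k$), and applying the preceding bound to the second factor, the factor $\diam_{S_H}(h)^n$ gets absorbed into a uniform constant as soon as $m \geq n + (p-1)(kD+1) + 2$. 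This produces the desired estimate via the elementary inequality $(1 + \diam)^m \leq 2^m (1 + \diam^m)$. The remaining cases are easier: for $p = 1$ the triangle inequality together with $\diam_{S_H}(h) \leq \diam_{S_G}(g)$ gives the estimate with $m = n$, $C = 1$, and does not require polynomial control at all; for $p = \infty$, the direct bound $|b_h| \leq \|c\|_{m,\infty}^{S_G} \sum_{g \in F(h)} \diam_{S_G}(g)^{-m}$ combined with the fiber estimate yields the claim with $m = n + kD + 2$.

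The main technical hurdle is precisely the uniform-in-$h$ fiber estimate: the fibers $F(h)$ are typically infinite cosets of $(\ker\varphi)^k$, and it is only the polynomial-control hypothesis that turns them into counts of uniformly polynomial growth, independently of the chosen representative. A secondary technicality concerns the constant tuple $(e,\ldots,e)$, whose diameter vanishes; its contribution must be controlled by the auxiliary semi-norm $\|c\|_{0,p}^{S_G}$ (the $n = 0$ member of the defining family), which is why that term appears alongside $\|c\|_{m,p}^{S_G}$ in the target estimate.
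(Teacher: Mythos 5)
Your proof is correct and follows essentially the same strategy as the paper: reduce continuity to a single estimate in each degree, apply H\"older's inequality to the fiber sums $\sum_{g \in \varphi^{-1}(h)} a_g$, and use the polynomial control of $\ker\varphi$ to make the dual H\"older factor summable. The differences are minor and stylistic. The paper bounds the dual factor $\bigl(\sum_{g \in \varphi^{-1}(h)} \diam_S(g)^{-m/(p-1)}\bigr)^{p-1}$ by an $h$-independent constant via a minimizer-based triangle inequality (namely $d_S(e, g_j(h)\kappa_j) \geq \tfrac12 d_S(e,\kappa_j)$ for a minimizer $g_j(h)$ of the word length in $\varphi^{-1}(h_j)$), and then transfers the weight $\diam_T(h)^n \leq \diam_S(g)^n$ to the first factor, arriving at $\|\varphi(c)\|_{n,p}^T \leq C \cdot \|c\|^S_{m+n,p}$; you instead keep the $h$-dependence of the dual factor (obtaining a bound decaying polynomially in $\diam_{S_H}(h)$ via a per-coordinate coset-translation count) and then absorb $\diam_{S_H}(h)^n$ into that decay, which is equivalent but requires a slightly larger $m$. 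You also shift to the weight $(1+\diam)^m$ to sidestep the constant tuple $(e,\dots,e)$, which is why your final estimate picks up the extra $\|c\|_{0,p}^{S_G}$ term; the paper just excludes $h=(e,\dots,e)$ explicitly and works with $\diam^m$. Both routes give the same conclusion, and your treatment of $p=1$ and $p=\infty$ matches the paper's as well.
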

\begin{proof}
  Let us establish some notation: 
  Let $S \subset G$ and $T \subset H$ be finite generating sets, and
  without loss of generality we may assume that $\varphi(S) \subset T$.
  Let $D \in \IN$ be the polynomial control rate of~$\ker \varphi$; hence,
  there is a~$K \in \IR_{>0}$ with
  \[ \fa{r \in \IN_{>0}}
     \beta(r) := \bigl| B_e^{G, S}(r) \cap \ker \varphi \bigr|
     \leq K \cdot r^D. 
  \]   
  Furthermore, let $p \in [1,\infty]$ and $k,n \in \IN$. It then
  suffices to prove that there exist $m \in \IN$ and $C \in \IR_{>0}$
  such that
  \[ \fa{c \in C_k(G;\IC)}
     \bigl\| C_k(\varphi;\IC)(c) \bigr\|^T_{n,p}
     \leq C \cdot \|c\|_{m+n,p}^S.
  \]
  The arguments are similar to the proof of Proposition~\ref{prop:comppq}:

  Let $c \in C_k(G;\IC)$, say~$c = \sum_{g \in G^k} a_g \cdot [e,g_1,\dots,g_k]$.
  By definition of~$C_k(\varphi;\IC)$, we have 
  \[ \varphi(c)
     := C_k(\varphi;\IC)(c)
     = \sum_{h \in H^k} \biggl(\sum_{g \in \varphi^{-1}(h)} a_g\biggr) \cdot [e,h_1,\dots, h_k],
  \]
  where $\varphi^{-1}(h) := \bigl\{ g \in G^k \bigm|
  \fa{j \in \{1,\dots,k\}} \varphi(g_j) = h_j\bigr\}$.    

  We will first consider the case~$p \in (1, \infty)$. Let 
  \[ m := \bigl\lceil (k \cdot D + 2) \cdot (p-1)\bigr\rceil
  \]
  and let $\overline p := p / (p-1)$. As first step, we bound the
  inner sum for a given~$h \in H^k$ (without loss of generality, we
  may assume~$h \neq (e,\dots,e)$): By the H\"older inequality,
  \begin{align*}
    \biggl| \sum_{g \in \varphi^{-1}(h)} a_g \biggr|^p
    & \leq \biggl( \sum_{g \in \varphi^{-1}(h)} |a_g|\biggr)^p
    \\
    &
    \leq \sum_{g \in \varphi^{-1}(h)} |a_g|^p \cdot \diam_S(g)^m
    \cdot \biggl( \sum_{g \in \varphi^{-1}(h)} \diam_S(g)^{- \frac{\overline p \cdot m}{p}}\biggr)^{\frac{p}{\overline p}}
    \\
    &
    \leq \sum_{g \in \varphi^{-1}(h)} |a_g|^p \cdot \diam_S(g)^m
    \cdot \biggl( \sum_{g \in \varphi^{-1}(h)} \diam_S(g)^{- \frac{m}{p-1}}\biggr)^{p-1}.
  \end{align*}
  The first factor is related to~$\|c\|_{m,p}^S$ and hence of the
  right type.  We will now take care of the second factor: To this
  end, for~$j \in \{1,\dots,k\}$ let $g_j(h) \in G$ be a minimiser
  of~$\min \bigl\{ d_S(e,g) \bigm| g \in G,\ \varphi(g) = h_j\bigr\}$.
  Then we have
  \begin{align*}
    \fa{\kappa_j \in \ker \varphi} d_S\bigl(e, g_j(h) \cdot \kappa_j)
    & \geq \frac12 \cdot \bigl (d_S(e,g_j(h)) + d_S(e, g_j(h) \cdot \kappa_j) \bigr)
    \\
    &
    \geq \frac12 \cdot d_S\bigl(g_j(h), g_j(h) \cdot \kappa_j\bigr)
    = \frac12 \cdot d_S(e,\kappa_j)
  \end{align*}
  and hence the polynomial control on the kernel yields
  \begin{align*}
    \sum_{g \in \varphi^{-1}(h)} \diam_S(g)^{- \frac{m}{p-1}}
    & = \sum_{\kappa \in (\ker \varphi)^k} \diam_S\bigl(g(h) \cdot \kappa\bigr)^{-\frac{m}{p-1}}
    \\
    & 
    \leq \sum_{\kappa \in (\ker \varphi)^k} \bigl(\max_{j \in \{1,\dots,k\}} d_S(e, g_j(h) \cdot \kappa_j)\bigr)^{-\frac{m}{p-1}}
    \\
    &
    \leq 2^{\frac m{p-1}} \cdot 
    \sum_{\kappa \in (\ker \varphi)^k} \bigl( \max_{j \in \{1,\dots,k\}} d_S(e,\kappa_j)\bigr)^{-\frac{m}{p-1}}
    \\
    &
    \leq 2^{\frac m{p-1}} \cdot 
    \sum_{r=1}^\infty \beta(r)^k \cdot r^{-\frac{m}{p-1}}
    \leq 2^{\frac m{p-1}} \cdot K^k \cdot 
    \sum_{r=1}^\infty r^{k\cdot D} \cdot r^{-\frac{m}{p-1}}
    \\
    &
    \leq 2^{\frac m{p-1}} \cdot K^k \cdot \zeta(2).
  \end{align*}
  We set~$C := (2^{\frac m{p-1}} \cdot K^k \cdot \zeta(2))^{1/(p-1)}$. 
  Putting it all together, we obtain (because~$\varphi(S) \subset T$)
  \begin{align*}
    \bigl(\|\varphi(c)\|_{n,p}^T\bigr)^p
    & = \sum_{h \in H^k} \biggl| \sum_{g \in \varphi^{-1}(h)} a_g \biggr|^p \cdot \diam_T(h)^n
    \\
    &
    \leq C \cdot \sum_{h \in H^k} \sum_{g \in \varphi^{-1}(h)} |a_g|^p \cdot \diam_S(g)^m \cdot \diam_T(h)^n
    \\
    & 
    \leq C \cdot \sum_{h \in H^k} \sum_{g \in \varphi^{-1}(h)} |a_g|^p \cdot \diam_S(g)^m \cdot \diam_S(g)^n
    \\
    &
    \leq C \cdot \sum_{g \in G^k} |a_g|^p \cdot \diam_S(g)^{m+n}
    = C \cdot \bigl(\|c\|_{m+n,p}^S\bigr)^p,
  \end{align*}
  as desired.

  In the case $p=1$, the estimates above simplify significantly
  because the inner sum can be treated directly with the inherited
  $\ell^1$-bound and one obtains
  \[ \fa{c \in C_k(G;\IC)} \bigl\| \varphi(c)\bigr\|_{n,1}^T \leq \|c\|_{n,1}^S.
  \]

  In the case $p=\infty$, we take~$m := k \cdot D + 2$. Then the inner sum
  admits the following estimate for given~$h \in H^k$ (without loss of generality, we
  may assume~$h \neq (e,\dots,e)$):
  \begin{align*}
    \biggl| \sum_{g \in \varphi^{-1}(h)} a_g \biggr|
    &
    \leq \sum_{g \in \varphi^{-1}(h)} |a_g| \cdot \diam_S(g)^m \cdot \diam_S(g)^{-m}
    \\
    &
    \leq \sup_{g \in \varphi^{-1}(h)} |a_g| \cdot \diam_S(g)^m
    \cdot \sum_{g \in \varphi^{-1}(h)} \diam_S(g)^{-m}
    \\
    & 
    \leq \sup_{g \in \varphi^{-1}(h)} |a_g| \cdot \diam_S(g)^m \cdot 2^m \cdot K^k \cdot \zeta(2).
  \end{align*}
  This implies~$\| \varphi(c)\|_{n,\infty}^T \leq 2^m \cdot K^k \cdot \zeta(2) \cdot \|c\|_{m+n,\infty}^S$.
\end{proof}

\begin{cor}[functoriality]
  Let $p \in [1,\infty]$, let $G$ and $H$ be finitely generated
  groups, and let $\varphi \colon G \longrightarrow H$ be a group
  homomorphism with polynomially controlled kernel.
  \begin{enumerate}
  \item
    Then $C_*(\varphi;\IC)$ admits a well-defined, continuous extension
    \[ C_*^p(\varphi) \colon C_*^p(G) \longrightarrow C_*^p(H),
    \]
    which is a chain map.
  \item
    In particular, we obtain a corresponding
    homomorphism~$H_*^p(\varphi) \colon H_*^p(G) \longrightarrow
    H_*^p(H)$ that is compatible with~$H_*(\varphi;\IC) \colon
    H_*(G;\IC) \longrightarrow H_*(H;\IC)$.
  \end{enumerate}
\end{cor}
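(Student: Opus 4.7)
The plan is to deduce this as a formal consequence of Proposition~\ref{prop:polkernel}, the density of the algebraic bar complex in its Fréchet completion, and the standard fact that a continuous linear map between Fréchet spaces that is defined on a dense subspace extends uniquely.

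For part~(1), the chain map $C_*(\varphi;\IC)$ is already defined on the algebraic bar complex, so the task is to promote the continuity established in Proposition~\ref{prop:polkernel} to an extension between the Fréchet completions. Given $n \in \IN$, I would produce an $m \in \IN$ and a constant bounding
\[ \bigl\|C_k(\varphi;\IC)(c)\bigr\|_{n,p}^T + \bigl\|\partial C_k(\varphi;\IC)(c)\bigr\|_{n,p}^T \]
in terms of $\|c\|_{m,p}^S + \|\partial c\|_{m,p}^S$. The first summand is controlled directly by Proposition~\ref{prop:polkernel} applied in degree~$k$. For the second summand, I use that $\partial \circ C_k(\varphi;\IC) = C_{k-1}(\varphi;\IC)\circ\partial$ holds on bar chains and apply Proposition~\ref{prop:polkernel} in degree~$k-1$ to $\partial c$. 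Taking the maximum of the two resulting weights yields the required Fréchet-continuity estimate. Density of $C_*(G;\IC)$ in $C_*^p(G)$ then gives a unique continuous extension $C_*^p(\varphi)\colon C_*^p(G) \longrightarrow C_*^p(H)$. The chain map identity holds on the dense subspace $C_*(G;\IC)$ and involves only continuous operators on the completion, so it extends to all of $C_*^p(G)$.

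For part~(2), any chain map between chain complexes induces a map on homology, which gives $H_*^p(\varphi)$. Compatibility with $H_*(\varphi;\IC)$ follows from the commutative square at the chain level formed by $C_*(\varphi;\IC)$, $C_*^p(\varphi)$ and the canonical continuous inclusions $C_*(G;\IC) \hookrightarrow C_*^p(G)$, $C_*(H;\IC) \hookrightarrow C_*^p(H)$; this square commutes by construction of $C_*^p(\varphi)$ as an extension of $C_*(\varphi;\IC)$, and passing to homology yields the claimed compatibility.

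There is no substantial obstacle here, since the analytic heart of the statement is already contained in Proposition~\ref{prop:polkernel}. The only point requiring mild care is that the Fréchet topology on $C_*^p(-)$ is defined by the combined norms $\|-\|_{n,p}^S + \|\partial-\|_{n,p}^S$ rather than by $\|-\|_{n,p}^S$ alone, which forces one to invoke Proposition~\ref{prop:polkernel} in two consecutive degrees instead of only in degree~$k$.
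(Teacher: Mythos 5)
Your proof is correct and follows essentially the same route as the paper, which simply states that the corollary is a direct consequence of Proposition~\ref{prop:polkernel}; you merely unpack the density/extension argument and the (standard but worth noting) point that the Fréchet seminorms also involve $\|\partial - \|_{n,p}^S$, so one must invoke the proposition in degree $k-1$ on $\partial c$ as well.
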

\begin{proof}
  This is a direct consequence of Proposition~\ref{prop:polkernel}.
\end{proof}

\section{Comparison in the range \texorpdfstring{${(1, \infty)}$}{[1,infty)}}\label{sec:comparison}

\begin{thm}\label{thm:pqcompgeneral}
  Let $G$ be a finitely generated group of exponential growth
  and $p,q \in (1,\infty)$ with~$p < q$.
  \begin{enumerate}
  \item The inclusion~$C_*^p(G) \longrightarrow C_*^q(G)$ is a chain homotopy
    equivalence.
  \item In particular, the canonical map~$H_*^p(G) \longrightarrow H_*^q(G)$
      (see Remark~\ref{rem:p<qmap}) is an isomorphism.
  \end{enumerate}
\end{thm}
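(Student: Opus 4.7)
Part (2) of the theorem follows immediately from part (1): a chain homotopy equivalence of Fr\'echet chain complexes induces isomorphisms on homology. So the plan concentrates on part (1): given the canonical continuous inclusion $i \colon C_*^p(G) \longrightarrow C_*^q(G)$ of Remark~\ref{rem:p<qmap}, I need to produce a continuous chain map $r \colon C_*^q(G) \longrightarrow C_*^p(G)$ together with chain homotopies witnessing $i \circ r \simeq \mathrm{id}_{C_*^q(G)}$ and $r \circ i \simeq \mathrm{id}_{C_*^p(G)}$.

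The argument will differ substantially in character from the polynomial-growth case (Proposition~\ref{prop:comppq}). There the identity on $C_*(G;\IC)$ itself served as the inverse: H\"older gave $\|c\|_{n,p}^S \leq \mathrm{const} \cdot \|c\|_{m,q}^S$ for $m$ large enough, because the comparison tail sum $\sum_{r \geq 1} r^{kD} \cdot r^{-M}$ converges as soon as the polynomial weight $M$ beats the polynomial volume growth $kD$. Under exponential growth the analogous sum reads $\sum_{r \geq 1} \lambda^{kr} \cdot r^{-M}$ with $\lambda > 1$ and diverges for every polynomial weight. No pointwise norm comparison is therefore possible, and the inverse $r$ must genuinely redistribute mass rather than merely re-estimate it.

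My plan is to realize $r$ as a smoothing operator $S$ on the algebraic bar chains, equipped with an explicit degree-one primitive $H$ satisfying the cone identity $\partial H + H \partial = \mathrm{id} - S$ on $C_*(G;\IC)$. A natural candidate is a degeneracy-type construction: insert a chosen group element, or better an average over a pool of such, into each basis chain $[e,g_1,\dots,g_k]$, producing at once a term contributing to $S c$ and the cone contribution to $H c$ with the usual sign pattern. The exponential lower bound $|B_r^{G,S}(e)| \gtrsim \lambda^r$ is what supplies the pool of roughly $\lambda^r$ admissible insertion points at distance $r$, and a H\"older estimate converts the resulting average into an $\ell^q$-to-$\ell^p$ bound; the hypothesis $p, q \in (1,\infty)$ enters precisely in order to keep the H\"older dual exponents finite at both ends, which should account for the exclusion of the endpoints.

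The main obstacle I expect lies in reconciling the algebraic and analytic sides of this construction. The bar differential concatenates consecutive entries via $(g_i, g_{i+1}) \mapsto g_i g_{i+1}$, which does not commute with naive translation or averaging of entries, so the insertion has to be chosen compatibly with the cone differential and with careful sign bookkeeping in order to yield the identity $\partial H + H \partial = \mathrm{id} - S$. In addition, both $S$ and $H$ must be continuous for every seminorm $\|-\|_{n,p}^S + \|\partial\,-\|_{n,p}^S$, with $\|S c\|_{n,p}^S$ controlled uniformly by some $\|c\|_{n',q}^S$. Verifying these operator bounds ball-by-ball using the exponential growth, and then upgrading them to the Fr\'echet completion, constitutes the bulk of the technical work; once this is in place, the identity $\mathrm{id} - S = \partial H + H \partial$ on each side exhibits $i$ as a chain homotopy equivalence, and part (2) follows by passing to homology.
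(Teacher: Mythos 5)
Your plan is essentially the paper's approach: the paper (Proposition~\ref{prop:EBconstruction}) defines a ``diffusion cone operator'' $B$ that averages the cone $[z,e,g_1,\dots,g_k]$ over cone points $z$ drawn from annuli of radius growing polynomially (degree $N=100$) in $\diam_S(g)$, sets $E := \id - \partial B - B\partial$, and then uses the exponential growth to supply the large pools of cone points and the generalized H\"older inequality for the $\ell^q$-to-$\ell^p$ bounds, exactly as you sketch. The one technical obstacle you do not quite identify is accumulation rather than noncommutativity: after applying $\partial$ to the diffused chain, distinct diffused simplices can coincide and pile up coefficients, and the paper's Lemmas~\ref{lem:accum} and~\ref{lem:normcontrol} --- together with taking the annulus radius $\diam_S(g)^{N}$ with $N$ large so that its cardinality $\gtrsim \alpha^{r^{N/10}/2}$ beats the fibre-size bound $\beta_{G,S}(r)^{k}$ --- are precisely what control this.
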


The proof of Theorem~\ref{thm:pqcompgeneral} is based on the following
basic chain-level result, which will be proved in Section~\ref{subsec_completing_proof_estimates}.

\begin{prop}\label{prop:EBconstruction}
  Let $G$ be a finitely generated group of exponential growth with finite generating set~$S$
  and let $p,q \in (1,\infty)$ with~$p < q$.  Then there exists a
  chain map~$E \colon C_*(G;\IC) \longrightarrow C_*(G;\IC)$ and a chain
  homotopy~$B$ between~$E$ and the identity with the following
  properties: For all~$k, n \in \IN$ there exist~$K \in \IR_{>0}$ and $m \in \IN$
  such that for all~$c \in C_k(G;\IC)$ we have
  \begin{align}
    \bigl\| E(c) \bigr\|_{n,p}^S
    & \leq K \cdot \bigl( \|c\|_{m,q}^S + \| \partial c \|_{m,q}^S \bigr)
    \label{eq:Enorm}
    \\
    \bigl\| \partial E(c) \bigr\|_{n,p}^S
    & \leq K \cdot \bigl( \|c\|_{m,q}^S + \| \partial c \|_{m,q}^S \bigr)
    \label{eq:dEnorm}
    \\
    \bigl\| B(c) \bigr\|_{n,p}^S
    & \leq K \cdot \bigl( \|c\|_{m,p}^S + \| \partial c \|_{m,p}^S \bigr)
    \label{eq:Bnormpp}
    \\ 
    \bigl\| \partial B(c) \bigr\|_{n,p}^S
    & \leq K \cdot \bigl( \|c\|_{m,p}^S + \| \partial c \|_{m,p}^S \bigr)
    \label{eq:dBnorm}
  \end{align}
\end{prop}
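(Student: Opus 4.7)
The plan is to construct $E$ and $B$ by an explicit geometric smoothing procedure that exploits the exponential growth of~$G$. The guiding intuition: exponential growth provides exponentially many chains obtained from a given simplex by natural homological operations, and an appropriate averaging over such chains reduces the $\ell^p$-norm of the output by an exponential factor that absorbs any polynomial discrepancy between the $\ell^p$- and $\ell^q$-norms.

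First, using the hypothesis on~$G$, I would fix~$\lambda > 1$ with~$|B_e^{G,S}(r)| \geq \lambda^r$ eventually, and select, for each~$r \in \IN$, a subset~$F_r \subseteq G$ of cardinality of exponential order in~$r$. For each simplex~$\sigma = [e, g_1, \dots, g_k]$ of diameter~$d$, I would define $E(\sigma)$ as a normalized weighted average of $k$-chains obtained by applying explicit simplicial operations (for instance, iterated cone or prism insertions) parametrized by tuples of elements of~$F_{r(d)}$, where $r(d)$ is a function growing slowly with~$d$. Simultaneously, I would define the witnessing chain homotopy~$B(\sigma)$ from the same combinatorial data, so that the identity $E = \id - \partial B - B \partial$ holds by construction and $E$ is automatically a chain map.

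The four estimates are then verified by direct computation. The $p \to p$ estimates on~$B$, namely~\eqref{eq:Bnormpp} and~\eqref{eq:dBnorm}, follow because $B(\sigma)$ is a sum of polynomially many simplices of diameter at most $d + O(r(d))$, giving a $p \to p$ estimate with only a polynomial shift $m - n$ in the weight exponent. The $q \to p$ estimates on~$E$, namely~\eqref{eq:Enorm} and~\eqref{eq:dEnorm}, are the decisive step: averaging over $|F_{r(d)}|$ many terms produces a factor of order $\lambda^{-c \cdot r(d)}$ in the $\ell^p$-norm via a H\"older inequality in the spirit of the estimate in the proof of Proposition~\ref{prop:comppq}. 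Choosing $r(d)$ linear in~$d$ with an appropriate slope, this exponential decay dominates any polynomial weight discrepancy, so both $E$ and $\partial E$ will satisfy the required $q \to p$ bounds.

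The main obstacle is to arrange the averaging so that $E = \id - \partial B - B \partial$ is a genuinely useful (i.e., $q \to p$ bounded) chain map rather than a vacuous one. Indeed, naive averaging schemes, such as averaging the standard prism operator that witnesses translation invariance of the bar complex, descend to zero on the $G$-coinvariants, producing $E = \id$ with no norm improvement. A non-trivial smoothing therefore requires either the averaging to act on the internal vertex structure of the simplices or to be organized as a multi-scale telescope across diameters. Getting the combinatorics right simultaneously for the chain-map property, the $q \to p$ improvement on~$E$, and the bookkeeping of the $\partial$-terms in each of the four estimates will be the technical heart of the proof.
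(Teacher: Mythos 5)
Your overall strategy matches the paper's: define a normalized averaging operator~$B$ over exponentially many geometric modifications of each simplex (supplied by exponential growth), set~$E := \id - \partial B - B\partial$ so that the chain-map and chain-homotopy properties are automatic, and obtain the $q \to p$ improvement for~$E$ via a H\"older inequality in which the normalization factor~$|F_{r(d)}|^{-1}$ decays fast enough to swallow the polynomial weight discrepancy. You also correctly flag the non-trivial point that naive averaging (e.g.\ of the equivariance prism) would descend to~$E = \id$ on the bar complex, and your two suggested escapes --- averaging over the internal vertex structure, or multi-scale across diameters --- are precisely what the paper's construction does: it cones each simplex with cone points drawn from an annulus whose radius depends on the diameter of the simplex.

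However, there is a genuine gap exactly where you defer to the ``technical heart.'' The delicate estimate is $\|c - \partial B(c)\|_{n,p} \leq K\|c\|_{m,q}$, and its proof requires controlling how many distinct input simplices~$g$ can contribute to a fixed output simplex~$[z,g_1,\dots,g_k]$ appearing in~$\partial B(c)$ (a fibre/accumulation bound). This is not mere bookkeeping: it forces a specific choice of where the cone points live. The paper places the cone points in annuli at radius~$\diam_S(g)^N$ of width~$\diam_S(g)^{N/10}$ (with~$N$ large), and the crucial feature is that after the slop of~$\pm\diam_S(g)$ incurred by translating a face, the resulting intervals of possible distances for different diameters remain disjoint, because $(r+1)^N - r^N \sim N r^{N-1}$ dominates~$r^{N/10} + r$. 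With your proposed~$r(d)$ linear in~$d$, the cone points sit at radius~$\sim \lambda d$, and the~$\pm d$ slop makes these intervals overlap for nearby diameters; the accumulation control (Lemma~\ref{lem:accum}) then fails and the push-forward bound needed for~\eqref{eq:iddBnorm} is lost. So the superlinear (in fact polynomial of high degree) growth of the annulus radii is not an artefact of the paper's presentation but an essential input, and your parameter choice would need to be revised. A secondary, more cosmetic inaccuracy: the~$p\to p$ bound on~$B$ does not come from~$B(\sigma)$ being a sum of polynomially many simplices --- in the paper's construction~$B(\sigma)$ has \emph{exponentially} many terms --- but from the normalization~$1/|Z_g|$, which gives~$\|B(\sigma)\|_{n,p}^p \leq |Z_g|^{1-p} \cdot (\text{diam})^n \leq (\text{diam})^n$ regardless of~$|Z_g|$.
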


Taking Proposition~\ref{prop:EBconstruction} for granted, the proof of Theorem~\ref{thm:pqcompgeneral} is immediate:

\begin{proof}[Proof of Theorem~\ref{thm:pqcompgeneral}]
  We write~$i \colon C_*^p(G) \longrightarrow C_*^q(G)$ for the canonical
  inclusion map. 
  Let $E$ and $B$ be maps as provided by Proposition~\ref{prop:EBconstruction}.
  Estimates~\eqref{eq:Enorm} and \eqref{eq:dEnorm} show that $E$ extends
  to a continuous chain map
  \[ \overline E \colon C_*^q(G) \longrightarrow C_*^p(G).
  \]
  Similarly, the Estimates~\eqref{eq:Bnormpp} and \eqref{eq:dBnorm} (for~$p$ and
  for~$q$) show that
  $B$ extends to continuous chain homotopies
  \begin{align*}
    \overline B(p) \colon C_*^p(G) & \longrightarrow C_*^p(G) \\
    \overline B(q) \colon C_*^q(G) & \longrightarrow C_*^q(G)
  \end{align*}
  between $E \circ i$ and the identity on~$C_*^p(G)$ and between $i
  \circ E$ and the identity on~$C_*^q(G)$, respectively. Therefore,
  $i$ is a chain homotopy equivalence and thus induces an
  isomorphism~$H_*^p(G) \longrightarrow H_*^q(G)$ on homology.
\end{proof}

\subsection{Diffusion}

It remains to construct the maps~$E$ and~$B$ in
Proposition~\ref{prop:EBconstruction}. The fundamental observation is
that $\ell^p$-norms on~$C_*(G;\IC)$ can be decreased by diffusing the
coefficients over a large number of simplices. Therefore, we diffuse
the simplices by coning them off with cone points in annuli of suitable
radii (Figure~\ref{fig:diffusion}). 

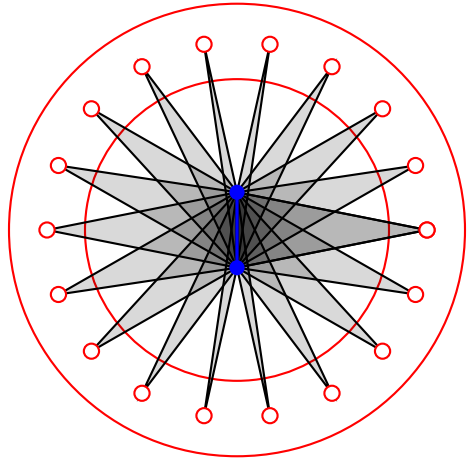
\begin{figure}
  \begin{center}
    \begin{tikzpicture}[x=1cm,y=1cm,thick]
      \draw[red] (0,0) circle (2);
      \draw[red] (0,0) circle (3);
      \foreach \j in {0,20,...,360} {
        \filldraw[fill=black, fill opacity=0.15] (0,-0.5) -- (\j:2.5) -- (0,0.5) -- cycle;
        \begin{scope}[red]
          \egvertx{(\j:2.5)}
        \end{scope}
      }
      \begin{scope}[blue, very thick]
        \draw (0,-0.5) -- (0,0.5);
        \gvertx{(0,-0.5)}
        \gvertx{(0,0.5)}
      \end{scope}
    \end{tikzpicture}
  \end{center}

  \caption{Diffusing chains/cones of a simplex (solid vertices; centre) using cone points
    (empty) in an annulus}
  \label{fig:diffusion}
\end{figure}

\begin{defn}[diffusion cone operator]\label{def:diffusion}
  Let $G$ be a finitely generated group, $k \in \IN$ and let~$Z \colon G^*
  \longrightarrow P_{\fin}(G)$ be a map (here $P_{\fin}(G)$ denotes the collection of finite subsets of $G$). The \emph{diffusion cone
    operator associated with~$Z$} is defined by
  \begin{align*}
    C_k(G;\IC) & \longrightarrow C_{k+1}(G;\IC)
    \\
      [e,g_1,\dots,g_k] & \longmapsto
      \frac1{|Z_{(g_1,\dots, g_k)}|} \cdot \sum_{z \in Z_{(g_1,\dots,g_k)}} [z,e,g_1,\dots,g_k]\,.\qedhere
  \end{align*}
\end{defn}

The key parameter of the diffusion cone construction is the
function~$Z$ determining the supports of the diffused simplices. We
will use wide enough annuli of large enough radii. More precisely,
we let the radii grow polynomially (of high degree) in terms of
the diameter of the original simplices.

\begin{defn}[diffusion annuli]\label{def:annuli}
  Let $G$ be a finitely generated group with a chosen finite generating set~$S$
  and let $N \in \IN_{>10}$. We define the \emph{diffusion annuli map of degree~$N$} for $k \in \IN$ by
  \begin{align*}
    Z \colon 
    G^k & \longrightarrow P_{\fin}(G) \\
    g & \longmapsto \overline Z_{\diam_S(g)},
  \end{align*}
  where 
  \begin{align*}
    \overline Z \colon \IN & \longmapsto P_{\fin}(G) \\
    r & \longmapsto \overline Z _r := 
    \bigl\{ g \in G \bigm| r^N - r^{N/10} < d_S(e,g) \leq r^N
    \bigr\}.
  \end{align*}
  Moreover, we write 
  \begin{align*}
    \varphi \colon \IN & \longrightarrow \IN \\
    r & \longmapsto 2 \cdot r^N.
    \qedhere
  \end{align*}
\end{defn}

Before starting with the actual proof of Proposition~\ref{prop:EBconstruction},
we first collect some basic estimates concerning this diffusion construction:

\begin{lem}[accumulation control]\label{lem:accum}
  In the situation of Definition~\ref{def:annuli}, we have for all~$k \in \IN_{\geq 1}$,
  all~$g \in G^k$, and all~$z \in Z_g$:
  \begin{enumerate}
  \item\label{item_diam_estimate} Clearly, $\diam_S [z,e,g_1,\dots,g_k] \leq \varphi(\diam_S(g))$.
  \item If $j \in \{1,\dots,k\}$, $h \in G^k$, and $w \in Z_h$ satisfy the
    relation 
    $[z,e,g_1,\dots,\widehat g_j,\dots,g_k]
      = [w,e,h_1,\dots,\widehat h_j,\dots,h_k]$, then
    \[ w = z
       \quad\text{and}\quad
       \diam_S(g) = \diam_S(h).  
    \]
  \item If $h \in G^k$ and $w \in Z_h$ satisfy $[z,g_1,\dots,g_k] =[w,h_1,\dots,h_k]$, then
    \[ w = h_1 \cdot g_1^{-1} \cdot z
       \quad\text{and}\quad
       \diam_S(g) = \diam_S(h).
    \]
  \end{enumerate}
\end{lem}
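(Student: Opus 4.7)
The plan is to handle the three items with somewhat different techniques, all revolving around the geometry of the diffusion annuli. Part (1) is a direct triangle-inequality estimate: the inclusion $z \in Z_g = \overline Z_{\diam_S(g)}$ forces $d_S(e,z) \leq \diam_S(g)^N$, every vertex $g_i$ satisfies $d_S(e,g_i) \leq \diam_S(g)$, and because $\overline Z_0 = \emptyset$ the hypothesis $Z_g \neq \emptyset$ forces $\diam_S(g) \geq 1$, so $\diam_S(g) \leq \diam_S(g)^N$. Combining these, every pairwise distance in $\{z, e, g_1, \dots, g_k\}$ is bounded by $2 \diam_S(g)^N = \varphi(\diam_S(g))$.

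For parts (2) and (3), I would follow a two-step strategy. \emph{Step (a).} Unpack the equality of basis elements in the bar complex. Using the identification $[h_0, h_1, \dots, h_k] = [e, h_0^{-1} h_1, \dots, h_0^{-1} h_k]$, each stated equality becomes a literal equality of tuples in $G^{k+1}$. In (2), comparing entries forces $w = z$ and $g_i = h_i$ for $i \neq j$. In (3), comparing entries yields $h_i = (wz^{-1}) g_i$ for all $i \in \{1, \dots, k\}$; setting $\alpha := w z^{-1}$ and specializing to $i = 1$ identifies $\alpha = h_1 g_1^{-1}$, hence $w = h_1 g_1^{-1} z$. \emph{Step (b).} Deduce $\diam_S(g) = \diam_S(h)$ from annulus geometry. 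In (2), the element $z = w$ lies in both $\overline Z_{\diam_S(g)}$ and $\overline Z_{\diam_S(h)}$, so the claim reduces to pairwise disjointness of the annuli $\overline Z_r$ for distinct $r \in \IN$. In (3), the triangle inequality together with the bound $d_S(e, \alpha) = \|h_1 g_1^{-1}\|_S \leq d_S(e, h_1) + d_S(e, g_1) \leq \diam_S(g) + \diam_S(h)$ yields $|d_S(e, w) - d_S(e, z)| \leq \diam_S(g) + \diam_S(h)$, reducing the claim to disjointness of annuli even after a translation whose magnitude is at most the sum of the two radii.

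The hard part will be this quantitative separation, which is precisely why Definition~\ref{def:annuli} imposes $N > 10$. Assuming, for contradiction, that $r := \diam_S(h) > s := \diam_S(g)$, so $s \leq r-1$, combining the upper bound $d_S(e,z) \leq s^N \leq (r-1)^N$ with the lower bound $d_S(e,w) > r^N - r^{N/10}$ and the translation estimate above, the task reduces to showing
\[ r^N - (r-1)^N > r^{N/10} + 2r \qquad \text{for every integer } r \geq 2. \]
The left-hand side admits the lower bound $r^N(1 - e^{-N/r})$, which is at least $(1 - e^{-1})\, r^N$ when $r \leq N$ (via $e^{-N/r} \leq e^{-1}$) and at least $N r^{N-1}/2$ when $r > N$ (via the mean value theorem, using $1 - e^{-x} \geq x/2$ for $x \in [0,1]$). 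In either regime, the strict inequality $N - 1 > N/10$, equivalent to $N > 10$, forces the left-hand side to dominate $r^{N/10} + 2r$ for every $r \geq 2$. The boundary case $r = 1$ cannot arise, because $\overline Z_0 = \emptyset$ forces $s \geq 1$ and hence $r \geq 2$ in the strict-inequality regime.
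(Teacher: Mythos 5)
Your proof is correct and follows the same strategy as the paper: part (1) by a direct triangle-inequality estimate, part (2) by noting the common middle vertex $e$ pins down all entries and then invoking disjointness of the annuli $\overline Z_r$, and part (3) by extracting the relation $w = h_1 g_1^{-1} z$ from the $G$-orbit equality and comparing word lengths against the two annuli. The only cosmetic difference is in part (3): you compare $d_S(e,z)$ with $d_S(e,w)$ via the translation element $\alpha = h_1 g_1^{-1}$, whereas the paper compares the single common quantity $d_S(e,z^{-1}g_1) = d_S(e,w^{-1}h_1)$ against both annular intervals at once; moreover, you explicitly carry out the quantitative annulus-separation estimate (where $N>10$ is used) that the paper compresses into the one-line conclusion ``which is hence non-empty. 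Therefore, $r_g = r_h$.''
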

\begin{proof}
  \emph{Ad~1.}
  This is immediate from the construction.

  \emph{Ad~2.}
  Because both simplices have the same $1$-vertex (namely~$e$), all the
  vertices must coincide. Thus, $w = z$. Because the annuli defined by~$\overline Z$
  are disjoint for different radii and because $w = z \in Z_h \cap Z_g$, we obtain
  $\diam_S(g) = \diam_S(h)$.

  \emph{Ad~3.}
  The assumption implies that
  \[ \fa{j \in \{1,\dots,k\}} z^{-1} \cdot g_j = w^{-1} \cdot h_j.
  \]
  In particular, $w = h_1 \cdot g_1^{-1} \cdot z$.
  Using the abbreviations $r_g := \diam_S(g)$ and $r_h := \diam_S(h)$,
  we obtain by the triangle inequality that  $d_S(e,z^{-1}\cdot g_1)
    = d_S(e,w^{-1}\cdot h_1)$ is in the intersection 
  \begin{align*}
    [r_g^N - r_g^{N/10} - r_g, r_g^N + r_g]
    \cap 
    [r_h^N - r_h^{N/10} - r_h, r_h^N + r_h]
  \end{align*}
  (which is hence non-empty). 
  Therefore, $r_g = r_h$.
\end{proof}

\begin{lem}[norm control]\label{lem:normcontrol}
  In the situation of Definition~\ref{def:annuli}, let $k \in \IN$, and let 
  \begin{align*}
    I_k := \bigl\{ (z,g_1,\dots,g_k) \bigm| (g_1,\dots, g_k) \in G^k,\ z \in Z_g \bigr\}.
  \end{align*}
  Furthermore, let $J_k$ be a set, let $\pi \colon I_k \longrightarrow J_k$ be a
  map, and let $\beta \colon I_k \longrightarrow \IN$ be a function controlling the
  size of the fibres of~$\pi$, i.e.,
  \[ \fa{i \in I_k} \bigl| \pi^{-1}(\pi(i))\bigr| \leq \beta(i).
  \]
  For functions~$f \colon I_k \longrightarrow \IC$, we define the push-forward
  \begin{align*}
    \pi_* f \colon J_k & \longrightarrow \IC \\
    j & \longmapsto \sum_{i \in \pi^{-1}(j)} f(i).
  \end{align*}
  Finally, let $p \in (1,\infty)$ and let $f \colon I_k
  \longrightarrow \IC$ be a function with finite support.
  \begin{enumerate}
  \item Then
    \[ \| \pi_* f\|_p \leq \biggl(
       \sum_{i \in I_k} \beta(i)^p \cdot \bigl|f(i) \bigr|^p \biggr)^{1/p}.
    \]
  \item
    Moreover, let $q \in (1,\infty)$ with~$p< q$, let $q' \in
    (1,\infty)$ with~$1/q + 1/q' = 1/p$,
    and let $w \colon I_k \longrightarrow \IC$
    be a function such that $\pi_* w \colon J_k \longrightarrow \IC$ is $q'$-summable.
    Then (with respect to pointwise multiplication)
    \begin{align*}
      \bigl\| \pi_*(f \cdot w) \bigr\|_p
      & \leq \bigl\| \pi_* f \bigr\|_q \cdot \|\pi_*w\|_{q'}.
    \end{align*}
  \end{enumerate}
\end{lem}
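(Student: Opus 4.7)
The plan is to reduce both parts to pointwise estimates on $J_k$ followed by an $\ell^p$-summation; no geometric input beyond discrete convexity (power-mean/Jensen) and the generalized H\"older inequality is needed.

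For the first part I would fix $j \in J_k$ and start from the triangle inequality $|\pi_* f(j)| \leq \sum_{i \in \pi^{-1}(j)} |f(i)|$. Applying H\"older to the constant function $1$ and to $|f|$ on the finite set $\pi^{-1}(j)$ (equivalently, the discrete power-mean inequality with exponent $p$) then yields
\[
|\pi_* f(j)|^p \leq |\pi^{-1}(j)|^{p-1} \cdot \sum_{i \in \pi^{-1}(j)} |f(i)|^p.
\]
The key observation is that $|\pi^{-1}(j)| = |\pi^{-1}(\pi(i))| \leq \beta(i)$ for every $i$ in the fibre over $j$, and that $\beta(i) \geq 1$ since the fibre over $\pi(i)$ contains $i$. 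I would therefore pull the factor $|\pi^{-1}(j)|^{p-1}$ inside the sum and estimate it by $\beta(i)^{p-1} \leq \beta(i)^p$. Summing over $j \in J_k$ and swapping the order of summation (each $i \in I_k$ lies in exactly one fibre, namely over $\pi(i)$) then collapses the double sum to $\sum_{i \in I_k} \beta(i)^p |f(i)|^p$, which is the claimed bound.

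For the second part the crucial pointwise estimate is
\[
\bigl|\pi_*(f \cdot w)(j)\bigr| \leq \pi_* |f|(j) \cdot \pi_* |w|(j)
\]
for every $j \in J_k$: indeed, $\bigl|\sum_i f(i) w(i)\bigr| \leq \sum_i |f(i)||w(i)|$, and the product $\bigl(\sum_i |f(i)|\bigr) \cdot \bigl(\sum_{i'} |w(i')|\bigr)$ expands to $\sum_i |f(i)||w(i)|$ plus non-negative off-diagonal cross terms, so the product dominates. Taking $\ell^p$-norms on $J_k$ and applying the generalized H\"older inequality with exponents $q$ and $q'$ (which is admissible since the hypotheses $p > 1$ and $p < q$ force $q' \in (1,\infty)$) then gives $\|\pi_*|f| \cdot \pi_*|w|\|_p \leq \|\pi_*|f|\|_q \cdot \|\pi_*|w|\|_{q'}$. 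In the intended applications the functions $f$ and $w$ will be non-negative real (they arise from $|a_g|$ or from powers of diameters), so $\pi_*|f| = \pi_* f$ and $\pi_*|w| = \pi_* w$, recovering the stated inequality. I expect no serious obstacle: both parts reduce to elementary bookkeeping around the fibre-size bound $\beta$ together with standard discrete H\"older-type inequalities; the only small care is verifying $\beta(i) \geq 1$ for the exponent bump in part~1, and the admissibility of $q'$ for the generalized H\"older inequality in part~2.
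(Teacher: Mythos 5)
Your proof is correct and stays at the same elementary level as the paper's, but it fills in considerably more detail. For part~1 you apply the discrete H\"older inequality on each fibre to get $|\pi_*f(j)|^p \le |\pi^{-1}(j)|^{p-1}\sum_{i\in\pi^{-1}(j)}|f(i)|^p$ and then relax $\beta(i)^{p-1}$ to $\beta(i)^p$; the paper instead uses the cruder estimate $|a_1+\dots+a_n|^p \le n^p\bigl(|a_1|^p+\dots+|a_n|^p\bigr)$ obtained by replacing each $a_j$ with one of maximal modulus. Both routes are one-line and land on the same bound. For part~2 the paper only remarks that it is an instance of the generalized H\"older inequality, leaving implicit precisely the pointwise domination $|\pi_*(f\cdot w)(j)| \le \pi_*|f|(j)\cdot\pi_*|w|(j)$ that you spell out via the cross-term expansion; identifying and proving that step is the right move. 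You also correctly flag a small imprecision in the statement: for genuinely complex-valued $f$ the inequality $\|\pi_*(f\cdot w)\|_p \le \|\pi_*f\|_q\,\|\pi_*w\|_{q'}$ can fail because of cancellations in $\pi_*f$ (if $\pi$ collapses two points on which $f$ takes opposite values, $\pi_*f$ may vanish while $\pi_*(fw)$ does not). What your argument actually proves, and what is used in the proof of Proposition~\ref{prop:EBconstruction}, is $\|\pi_*(f\cdot w)\|_p \le \|\pi_*|f|\|_q\,\|\pi_*|w|\|_{q'}$; this is harmless downstream because part~1 bounds $\|\pi_*|f|\|_q$ and $\|\pi_*|w|\|_{q'}$ in terms of $|f|$ and $|w|$ only, so all subsequent estimates go through verbatim.
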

\begin{proof}
  The first part is a consequence of the following elementary estimate: For
  all~$n \in \IN$ and all~$a_1,\dots,a_n \in \IC$, we have (by looking at a
  coefficient of maximal modulus) 
  \[ |a_1 + \dots + a_n|^p \leq n^p \cdot \bigl( |a_1|^p + \dots + |a_n|^p\bigr).
  \]
  The second part is just an instance of the generalized H\"older inequality.
\end{proof}

\subsection{Completing the proof of the comparison result}
\label{subsec_completing_proof_estimates}

\begin{proof}[Proof of Proposition~\ref{prop:EBconstruction}]
  We choose the parameter~$N := 100$ for the construction
  in Definition~\ref{def:annuli} (basically any choice will work
  because of the exponential growth of~$G$).
  Let $Z \colon G^* \longrightarrow P_{\fin}(G)$ be the associated
  diffusion annuli map (Definition~\ref{def:annuli}) and let $B \colon
  C_*(G;\IC) \longrightarrow C_{*+1}(G;\IC)$ be the diffusion cone operator
  associated with~$Z$ (Definition~\ref{def:diffusion}). We then
  define
  \begin{align*}
    E := \id - \partial \circ B - B \circ \partial
    \colon C_*(G;\IC) \longrightarrow C_*(G;\IC).
  \end{align*}
  It is clear
  that $E$ is a chain map and $B$ a chain homotopy between~$E$
  and the identity on~$C_*(G;\IC)$.

  Therefore, it remains to prove the norm estimates. We first replace 
  this zoo of estimates by the following estimates: For all~$k,n \in \IN$
  there exist~$K \in \IR_{>0}$ and $m \in \IN$ such that for all~$c \in C_k(G;\IC)$
  we have
  \begin{align}
    \bigl\| B(c) \bigr\|_{n,p}^S
    & \leq K \cdot \|c\|_{m,p}^S 
    \label{eq:Bnormppp}
    \\ 
    \bigl\| B(c) \bigr\|_{n,p}^S
    & \leq K \cdot \|c\|_{m,q}^S 
    \label{eq:Bnormpq}
    \\
    \bigl\| c - \partial B(c) \bigr\|_{n,p}^S
    & \leq K \cdot \|c\|_{m,q}^S 
    \label{eq:iddBnorm}
  \end{align}
  These estimates imply the Estimates~\eqref{eq:Enorm}--\eqref{eq:dBnorm} (modulo
  unification of the constants by taking the maximum) as follows:
  \begin{itemize}
  \item Estimate~\eqref{eq:Bnormpp} follows from Estimate~\eqref{eq:Bnormppp}.
  \item Estimate~\eqref{eq:Enorm} follows from the fact that $E = \id -
    \partial \circ B - B \circ \partial$ and the Estimates~\eqref{eq:iddBnorm} and
    \eqref{eq:Bnormpq} (with modified constants).
  \item Estimate~\eqref{eq:dEnorm} follows from~\eqref{eq:Enorm} and the
    fact that $E$ is a chain map.
  \item Estimate~\eqref{eq:dBnorm} then follows from~$\partial \circ B = \id - E - B \circ \partial$
    and the Estimates~\eqref{eq:Enorm} (and Remark~\ref{rem:p<qmap}), and \eqref{eq:Bnormpp}
    (with modified constants).
  \end{itemize}

  In the following, let~$k, n\in \IN$, and let
  \[ c= \sum_{g \in G^k} a_g \cdot [e,g_1, \dots, g_k] \in C_k(G;\IC).
  \]

  We will first prove~\eqref{eq:Bnormppp};
  of course, \eqref{eq:Bnormppp} follows from~\eqref{eq:Bnormpq} (with Remark~\ref{rem:p<qmap}),
  but we will use this straightforward estimate as warm-up for the other estimates.
  By construction of the
  diffusion cone operator~$B$, we have (using Lemma~\ref{lem:accum}.\ref{item_diam_estimate} for the first inequality, and Definition~\ref{def:annuli} of $\varphi$ for the second inequality)
  \begin{align*}
    \bigl\| B_k(c) \bigr\|_{n,p}^S
    & = \biggl\| \sum_{g \in G^k} a_g \cdot \frac1{|Z_g|} \cdot
    \sum_{z \in Z_g} [z,e,g_1,\dots,g_k] \biggr\|_{n,p}^S
    \\
    & = \biggl(
    \sum_{g \in G^k} \sum_{z\in Z_g} \frac1{|Z_g|^p} \cdot |a_g|^p \cdot
    \bigl(\diam_S[z,e,g_1,\dots,g_k]\bigr)^n
    \biggr)^{1/p}
    \\
    & \leq \biggl(
    \sum_{g \in G^k} \frac1{|Z_g|^{p-1}} \cdot |a_g|^p
    \cdot \varphi\bigl(\diam_S(g)\bigr)^n
    \biggr)^{1/p}
    \\
    & \leq \biggl(
    \sum_{g \in G^k} |a_g|^p
    \cdot 2^n \cdot \bigl(\diam_S(g)\bigr)^{N \cdot n}
    \biggr)^{1/p}
    \\
    & = 2^{n/p} \cdot \|c\|_{N\cdot n,p}. 
  \end{align*}

  Before proving~\eqref{eq:Bnormpq} and \eqref{eq:iddBnorm},
  let us first fix some notation:
  Because of~$q > p$, there is some~$q'$ such that $1/q +
  1/q' = 1/p$; let $x := 1/q$, let $y := 1 - x = 1-1/p+1/q'$, and let
  $\varepsilon := y \cdot q' - 1 = q' \cdot (1 - 1/p) > 0$.   
  
  Let us establish~\eqref{eq:Bnormpq} (with~$m = 1$): 
  The generalized H\"older inequality shows that
  \begin{align*}
    \bigl\| B_k(c) \bigr\|_{n,p}^S
    & = \biggl(
    \sum_{g \in G^k} \sum_{z\in Z_g} \frac1{|Z_g|^p} \cdot |a_g|^p \cdot
    \bigl(\diam_S[z,e,g_1,\dots,g_k]\bigr)^n
    \biggr)^{1/p}
    \\
    & \leq \biggl(
    \sum_{g \in G^k} \sum_{z\in Z_g}
    \frac1{|Z_g|^{q\cdot x}} \cdot |a_g|^q \cdot \diam_S(g)
    \biggr)^{1/q}
    \\
    & \qquad \cdot
    \biggl(
    \sum_{g \in G^k} \sum_{z\in Z_g}
    \frac1{|Z_g|^{q'\cdot y}} \cdot
    \frac{\bigl(\diam_S[z,e,g_1,\dots,g_k]\bigr)^{q'\cdot n / p}}
         {\bigl(\diam_S(g)\bigr)^{q'/ q}}
    \biggr)^{1/q'}
  \end{align*}
  We denote the first factor by~$A_1$ and the second factor by~$A_2$.
  As $q \cdot x =1$, we obtain
  \begin{align*}
    A_1 & = \biggl(
    \sum_{g \in G^k} |Z_g| \cdot 
    \frac1{|Z_g|^{q\cdot x}} \cdot |a_g|^q \cdot \diam_S(g)
    \biggr)^{1/q}
    = \|c\|_{1,q}^S.
  \end{align*}
  The term~$A_2$ can be estimated via
  \begin{align*}
    A_2^{q'} & \leq 
    \sum_{g \in G^k} \sum_{z\in Z_g}
    \frac1{|Z_g|^{q'\cdot y}} \cdot
    \frac{\varphi\bigl(\diam_S(g)\bigr)^{q'\cdot n / p}}
         {\bigl(\diam_S(g)\bigr)^{q'/ q}}
    \\
    & \leq 
    \sum_{g \in G^k} 
    \frac1{|Z_g|^{q'\cdot y - 1}} \cdot
    \frac{\varphi\bigl(\diam_S(g)\bigr)^{q'\cdot n / p}}
         {\bigl(\diam_S(g)\bigr)^{q' / q}}
    \\
    & \leq \sum_{r \in \IN}
    \frac{\bigl| \{ g\in G^k \mid \diam_S\{e,g_1,\dots,g_k\} = r\} \bigr|}{|\overline Z_r|^\varepsilon} \cdot
    \frac{\varphi(r)^{q'\cdot n/p}}{r^{q'/q}}
    \\
    & \leq \sum_{r \in \IN}
    \frac{\beta_{G,S}(r)^k}{|\overline Z_r|^\varepsilon} \cdot
    \frac{\varphi(r)^{q'\cdot n/p}}{r^{q'/q}},
  \end{align*}
  where $\beta_{G,S} \colon \IN \longrightarrow \IN$ denotes the growth
  function of~$G$ with respect to~$S$.
  The second factor in the series above is dominated by a polynomial (in~$r$);
  we will now show that the first factor decreases exponentially in~$r$:
  By definition, we have 
  \[ \beta_{G,S}(r)^k \leq \bigl(4 \cdot |S| \bigr)^{r \cdot k}.
  \]
  Because $G$ has exponential growth, there is an~$\alpha \in \IR_{>1}$
  such that $\beta_{G,S}(r) \geq \alpha^r$ holds for all~$r \in \IN$.
  Therefore, for all~$r \in \IN$, 
  \[ |\overline Z _r|
     \geq \bigl|\beta_{G,S}(r^{N/10}/2)| 
     \geq \alpha^{r^{N/10}/2},
  \]
  and so
  \begin{align*}
    \frac{\beta_{G,S}(r)^k}{|\overline Z_r|^\varepsilon}
    & \leq \frac{\bigl(4 \cdot |S|\bigr)^{r \cdot k}}{\alpha^{\varepsilon/2 \cdot r^{N/10}}},
  \end{align*}
  which (eventually) decreases exponentially in~$r$. Hence, $A_2^{q'}$ is dominated by a convergent
  series (whose value is independent of~$c$). This shows Estimate~\eqref{eq:Bnormpq}.
  
  Finally, we prove the most delicate Estimate~\eqref{eq:iddBnorm}. By construction,
  \begin{align*}
    c - \partial & B_k(c) \\
    & = \sum_{g \in G^k} a_g \biggl( [e,g_1,\dots,g_k]
    - \frac1{|Z_g|} \cdot \sum_{z \in Z_g} \partial [z,e,g_1,\dots,g_k]\biggr)
    \\
    & = \sum_{g \in G^k} a_g \cdot \frac1{|Z_g|} \cdot \sum_{z \in Z_g} \biggl( -[z,g_1,\dots,g_k]
        + \sum_{j=1}^k (-1)^{j+1} \cdot [z,e,g_1,\dots,\widehat g_j, \dots, g_k] \biggr).
  \end{align*}
  Therefore,
  \begin{align*}
    \bigl\|     c - \partial B_k(c) \bigr\|_{n,p}^S
    & \leq \biggl\| \sum_{g\in G^k} \frac1{|Z_g|} \cdot a_g \cdot \sum_{z \in Z_g} [z,g_1,\dots,g_k]\biggr\|_{n,p}^S
    \\
    & \qquad + \sum_{j=1}^k \biggl\| \sum_{g\in G^k} \frac1{|Z_g|} \cdot a_g \cdot
    \sum_{z \in Z_g} [z,e,g_1,\dots,\widehat g_j,\dots, g_k]\biggr\|_{n,p}^S.
  \end{align*}
  We will treat these $k+1$~sums separately. 
  In order to introduce~$\|-\|_{m,q}$, we again will use the generalized H\"older
  inequality. However, in contrast with the previous estimates, we now have to 
  carefully control accumulations of coefficients
  on $k$-simplices (using Lemma~\ref{lem:accum} and Lemma~\ref{lem:normcontrol}).
    
  We will only treat the first sum in detail (the other sums can be handled in
  the same way by modifying~$J_k$ accordingly).
  We will apply Lemma~\ref{lem:normcontrol} to the following situation:
  We consider the set
    \[ J_k := \bigl\{ [z,g_1,\dots,g_k] \bigm| (z,g_1,\dots,g_k) \in I_k \bigr\}
       \subset C_k(G;\IC),
    \]
    together with the canonical projection~$\pi \colon I_k \longrightarrow J_k$.    
    In view of Lemma~\ref{lem:accum}, the projection~$\pi$ has $\beta$-controlled
    fibres, where
    \begin{align*}
      \beta \colon I_k & \longrightarrow \IN \\
      (z,g_1,\dots,g_k) & \longmapsto \beta_{G,S} \bigl(\diam_S (g) \bigr)^k.
    \end{align*}
    Let $\delta \in \IR_{>0}$ with~$\delta < y - 1/q' = \min(y, \varepsilon/q')$ and 
    \begin{align*}
      f \colon I_k & \longrightarrow \IC \\
      (z,g_1,\dots,g_k) & \longmapsto
      \frac1{|Z_g|^{x + \delta}} \cdot a_g \cdot \diam_S[z,g_1,\dots,g_k]^{1/q}\,,
      \\
      w \colon I_k & \longrightarrow \IC \\
      (z,g_1,\dots,g_k) & \longmapsto
      \frac1{|Z_g|^{y-\delta}} \cdot \diam_S[z,g_1,\dots,g_k]^{n/p - 1/q}\,.
    \end{align*}
  Then, by construction,
  \[ \biggl\| \sum_{g \in G^k} \frac1{|Z_g|} \cdot a_g \cdot
     \sum_{z \in Z_g} [z,g_1,\dots,g_k] \biggr\|_{n,p}^S
     = \bigl\|\pi_* (f \cdot w) \bigr\|_p. 
  \]
  We will now bound~$\|\pi_*(f \cdot w)\|_p$ from above with the help of
  Lemma~\ref{lem:normcontrol}: Clearly, $f$ has finite support. Let us show that~$\pi_* w$ is a $q'$-summable function. By definition of~$w$, we have
  (with~$\Phi(r) := 2 \cdot r^{N \cdot (n/p-1/q) \cdot q'}$) 
  \begin{align*}
    \sum_{i \in I_k} \beta(i)^{q'} \cdot \bigl|w(i)\bigr|^{q'}
    & \leq \sum_{(z,g) \in I_k} \frac{\beta(z,g)^{q'}}{|Z_g|^{(y-\delta)\cdot q'}} \cdot \Phi(\diam_S(g))
    \\
    & \leq \sum_{r\in \IN} \beta_{G,S}(r)^k \cdot |\overline Z_r| \cdot \frac{\beta_{G,S}(r)^{q'\cdot k}}{|\overline Z_r|^{(y-\delta) \cdot q'}}
      \cdot \Phi(r)
    \\
    & \leq \sum_{r\in \IN} \frac{\beta_{G,S}(r)^{k+q'\cdot k}}
                              {|\overline Z_r|^{(y-\delta) \cdot q' -1}}
      \cdot \Phi(r).
  \end{align*}
  Because $(y- \delta) \cdot q' -1 > 0$, the same argument as in the proof
  of Estimate~\eqref{eq:Bnormpq} shows that first factor (eventually) decreases at least
  exponentially in~$r$ while $\Phi$ grows only polynomially in~$r$. Therefore,
  this series is convergent; let $A_2$ be the value of this series.
  The first part of Lemma~\ref{lem:normcontrol}
  shows that $\pi_* w$ is $q'$-summable and that
  \[ \| \pi_* w \|_{q'} \leq A_2^{1/q^\prime}.
  \]
  Therefore, the second part of Lemma~\ref{lem:normcontrol} shows that
  \begin{align*}
    \bigl\|\pi_*(f \cdot w)\bigr\|_p
    & \leq \| \pi_* f \|_q \cdot \|\pi_* w\|_{q'}
    \leq A_2^{1/q^\prime} \cdot \| \pi_* f\|_q.
  \end{align*}
  It hence remains to provide a suitable estimate for~$\|\pi_*f\|_q$. 
  Using Lemma~\ref{lem:normcontrol}, we obtain
  \begin{align*}
    \| \pi_*f\|_q^q
    & \leq \sum_{i \in I_k} \beta(i)^q \cdot \bigl|f(i)\bigr|^q
    \\
    & \leq \sum_{(z,g) \in I_k}
    \frac{\beta_{G,S}(\diam_S (g))^{q \cdot k}}
         {|Z_g|^{q \cdot x + q \cdot \delta}} \cdot |a_g|^q \cdot \varphi(\diam_S g)
    \\
    & \leq 2 \cdot \sum_{g \in G^k}
    \frac{\beta_{G,S}(\diam_S (g))^{q \cdot k}}{|Z_g|^{1 + q \cdot \delta}} \cdot |Z_g| \cdot |a_g|^q \cdot \diam_S(g)^N
    \\
    & = 2 \cdot \sum_{g \in G^k}
    \frac{\beta_{G,S}(\diam_S (g))^{q \cdot k}}{|Z_g|^{q \cdot \delta}} \cdot |a_g|^q \cdot \diam_S(g)^N\,.
  \end{align*}
  Again, because $q \cdot \delta > 0$, we see as in the proof of the Estimate~\eqref{eq:Bnormpq}
  that the first factor is bounded, say by~$A_1$. Then,
  \[ \|\pi_* f\|_q^q
  \leq 2 \cdot A_1 \cdot \sum_{g \in G^k} |a_g|^q \cdot \diam_S(g)^N
  = 2 \cdot A_1 \cdot \|c\|_{N,q}^q\,.
  \]
  This completes the proof of Proposition~\ref{prop:EBconstruction}
  and hence of Theorem~\ref{thm:pqcompgeneral}.
\end{proof}

\section{A vanishing result in degree~\texorpdfstring{$1$}{1}}\label{sec:vanishing}

We have the following vanishing result for the free group~$F_2$ of rank~$2$:

\begin{thm}\label{thm:f2vanishing}
  Let $p \in (1,\infty)$. Then the canonical homomorphism~$H_1(F_2;\IC)
  \longrightarrow H_1^p(F_2)$ is the zero map. 
\end{thm}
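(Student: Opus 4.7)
The plan is to show that both standard generators $[e,a]$ and $[e,b]$ of~$H_1(F_2;\IC) \cong \IC^2$ (with $a,b$ the free generators of~$F_2$, included in a fixed finite generating set~$S$) become boundaries in the completed complex~$C_*^p(F_2)$. The underlying idea is a diffusion construction, in the same spirit as the proof of Theorem~\ref{thm:pqcompgeneral}: exploit the exponential growth of~$F_2$ to smear the 1-simplex~$[e,s]$ (for $s \in \{a,b\}$) across a cone whose apex is averaged over many far-away group elements. Averaging by a factor~$1/|S_R|$ over the sphere~$S_R$ of radius~$R$ in~$F_2$ will beat any polynomial weight in the $\|-\|_{n,p}^S$ seminorm, as soon as~$p > 1$.

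Concretely, for~$s \in \{a,b\}$ and $R \in \IN$, I set
\[ x_R^s := \frac{1}{|S_R|} \sum_{g \in S_R} [e,g,gs] \in C_2(F_2;\IC). \]
Using the simplicial boundary formula $\partial[e,g_1,g_2] = [e,g_1^{-1} g_2] - [e,g_2] + [e,g_1]$ (obtained from the paper's conventions after translating the first vertex back to~$e$), I compute
\[ \partial x_R^s = [e,s] + \varepsilon_R^s, \qquad \varepsilon_R^s := \frac{1}{|S_R|} \sum_{g \in S_R} \bigl([e,g] - [e,gs]\bigr). \]
Each 2-simplex~$[e,g,gs]$ with $g \in S_R$ has diameter at most~$R+1$, so for every~$n \in \IN$
\[ \bigl(\|x_R^s\|_{n,p}^S\bigr)^p \le \sum_{g \in S_R} |S_R|^{-p} \cdot (R+1)^n = |S_R|^{1-p} \cdot (R+1)^n, \]
and a similar estimate, using that the support of~$\varepsilon_R^s$ is contained in~$S_R \cup S_R s$ (of size at most~$2|S_R|$) and lies in the ball of radius~$R+1$, gives $\|\varepsilon_R^s\|_{n,p}^S \le (2|S_R|)^{1/p} \cdot |S_R|^{-1} \cdot (R+1)^{n/p}$. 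Because $|S_R|$ grows exponentially in~$R$ and~$p > 1$, both bounds tend to~$0$ as $R \to \infty$.

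From these estimates it follows that the sequence $(x_R^s)_R$ is Cauchy in~$C_2^p(F_2)$: the $\|-\|_{n,p}^S$-seminorms of the~$x_R^s$ themselves go to~$0$, while $\partial x_R^s - [e,s] = \varepsilon_R^s \to 0$ in every~$\|-\|_{n,p}^S$ on $C_1(F_2;\IC)$, so both summands in the Fr\'echet seminorm $\|-\|_{n,p}^S + \|\partial -\|_{n,p}^S$ are Cauchy. By completeness, $(x_R^s)_R$ has a limit~$x^s \in C_2^p(F_2)$, and continuity of the boundary yields~$\partial x^s = [e,s]$ in~$C_1^p(F_2)$. Hence both~$[e,a]$ and~$[e,b]$ vanish in~$H_1^p(F_2)$, which is what we wanted. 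The only genuine obstacle is the quantitative one of matching the exponential denominator $|S_R|^{1-p}$ against polynomial factors of~$R$; this is precisely where the hypothesis $p > 1$ is indispensable, consistent with Theorem~\ref{thm:p=1} (the hyperbolic group~$F_2$ is polynomially contractible and of type~$F_\infty$), where the $p=1$ comparison map is in fact an isomorphism.
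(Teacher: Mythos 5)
Your proof is correct, and it takes a genuinely different and in fact more economical route than the paper's. The paper proves the vanishing by an explicit recursive construction: starting from the edge~$[e,\alpha]$, at each generation~$d$ it cones off the current edges with just two $2$-simplices per edge, halving the coefficient, and uses elaborately chosen markers and suffixes to prevent accidental cancellations across generations; because the number of edges grows like~$4^d$ while the coefficients only decay like~$2^{-d}$, the $\ell^p$-estimate works only for~$p>2$, so the paper must first invoke Theorem~\ref{thm:pqcompgeneral} to reduce to that range. Your construction instead smears the single edge~$[e,s]$ over the entire sphere~$S_R$, so that the coefficient~$1/|S_R|$ decays exponentially in~$R$ while the number of simplices grows only as~$|S_R|$; the resulting bound~$|S_R|^{1-p}\cdot(R+1)^n \to 0$ works directly for every~$p>1$, with no bookkeeping of markers and no reduction step. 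This is conceptually the same diffusion idea as the cone operator in Section~\ref{sec:comparison}, applied here in the cleanest possible way (and the distinctness of the $2$-simplices~$[e,g,gs]$ for~$g\in S_R$ is automatic, so no cancellation control is needed). The only minor point worth making explicit is that the limit~$x^s$ has~$\|x^s\|_{n,p}^S=0$ for all~$n$ yet~$\partial x^s=[e,s]\neq 0$, which is not a contradiction precisely because the Fr\'echet seminorms on~$C_2^p$ include the boundary term~$\|\partial-\|_{n,p}^S$; your Cauchy argument already handles both summands, so the conclusion~$\partial x^s=[e,s]$ in~$C_1^p(F_2)$ and hence the vanishing of~$H_1(F_2;\IC)\to H_1^p(F_2)$ is correct.
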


\begin{proof}
  Let $S := \{\alpha,\beta\}$ be a free generating set of the free group~$F_2$ of
  rank~$2$. In this proof, all distances, diameters, norms, etc.\ will be taken with
  respect to this generating set~$S$.

  Before starting with the actual proof, we perform the following
  reductions:
  \begin{itemize}
  \item
    In view of Theorem~\ref{thm:pqcompgeneral}, it suffices to prove
    Theorem~\ref{thm:f2vanishing} for~$p>2$. 
  \item
    Because~$H_1(F_2;\IC)$ is generated by the homology classes corresponding
    to the cycles~$[e,\alpha]$ and $[e,\beta]$, it suffices to show that
    the classes in~$H^p_1(F_2)$ represented by~$[e,\alpha]$ and $[e,\beta]$ are trivial.
  \item
    Since the classes represented by~$[e,\alpha]$ and $[e,\beta]$ only differ by an
    isometric automorphism of~$F_2$, it suffices to prove the vanishing for~$[e,\alpha]$. 
  \end{itemize}
  To this end, we will construct an explicit chain~$b$ in~$C^p_2(F_2)$ whose
  boundary is~$[e,\alpha]$.
  
  The geometric idea for the construction of such a $2$-chain~$b$ is
  to start with two $2$-simplices with coefficient~$1/2$ that
  contain~$[e,\alpha]$ as an edge; inductively, we then choose two
  $2$-simplices with halved coefficients that contain the new
  edges~\dots\ (Figure~\ref{fig:f2vanishing}).
  The resulting infinite chain will converge in the $\ell^p$-setting
  because the coefficients are distributed over enough summands. 
  The main technical difficulty is to ensure that the
  weights are really distributed so that they do not accumulate on
  simplices via accidental cancellations. This will be
  achieved by a careful selection of markers and suffixes that encode
  the induction level and the two different choices at each stage.

  \begin{figure}
    \begin{center}
      \begin{tikzpicture}[thick]
        \filldraw[fill=black,fill opacity=0.1] (0,0) -- (0,2) -- (4,0) -- cycle;
        \filldraw[fill=black,fill opacity=0.1] (0,0) -- (0,2) -- (4,2) -- cycle;
        \gvertx{(4,0)} 
        \gvertx{(4,2)} 
        \gvertx{(0,0)} 
        \gvertx{(0,2)} 
        \draw (-0.5,0) node {$e$};
        \draw (-0.5,2) node {$x$};
        \draw (4.3,0) node[anchor=west] {$xm(x)t_d$};
        \draw (4.3,2) node[anchor=west] {$xm(x)s_d$};
        \draw (2,-0.5) node {$t(x)$};
        \draw (2,2.5) node {$s(x)$};
      \end{tikzpicture}
    \end{center}

    \caption{For each edge~$[e,x]$, we choose two $2$-simplices that
      contain this edge (and halve the coefficients).}
    \label{fig:f2vanishing}
  \end{figure}
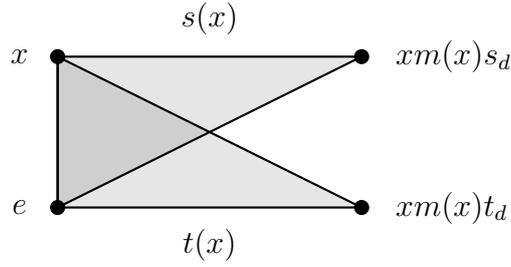
  
  We will describe the construction of~$b$ in a top-down manner, first giving
  the final formula and then explaining all the ingredients: For~$D\in \IN$, we
  set
  \[ b(D) := \sum_{d=0}^D \sum_{x \in W(d)} \frac1{2^{d+1}} \cdot
             \varepsilon(x) \cdot \bigl(s(x) + t(x)\bigr) \in C_2(F_2;\IC).
  \]
  We will then show that the sequence~$(b(D))_{D \in \IN}$ converges to a chain
  \[ b := \lim_{D \rightarrow \infty} b(D) \in C^p_2(F_2)
  \]
  that satisfies~$\partial b = [e,\alpha]$. But first we have to explain
  the ingredients of~$b(D)$: To this end, we define (by mutual recursion)
  the subsets~$W(d) \subset F_2$ (keeping track of the set of edges),
  the suffixes~$s_d, t_d \in F_2$, the markers~$m(x) \in F_2$
  and the $2$-simplices~$s(x)$ and~$t(x)$:
  \begin{itemize}
  \item For each~$d \in \IN$, we set~$s_d := \alpha^d\beta^d$ and $t_d := \beta^d \alpha^d \in F_2$.
  \item We set~$W(0) := \{\alpha\}$ (and $m(\alpha) := e$)
    and for~$d \in \IN_{\geq 1}$, we let
    \[ W(d) := \bigcup_{x\in W(d-1)} W(x,d) \subset F_2,
    \]
    where (for each~$x \in W(d-1)$)
    \[ W(x,d) := \{ x m(x) s_d, m(x)s_d, xm(x)t_d, m(x) t_d\} \subset F_2.
    \]
  \item
    Inductively, we see that $|W(d)| \leq 4^d$ for all~$d \in \IN$. We
    can thus choose an injection~$m \colon W(d) \longrightarrow
    \{\alpha, \beta\}^{2\cdot d}$ and view the words~$m(x)$ with~$x
    \in W(d)$ as elements of~$F_2$.
  \item
    For~$d \in \IN$ and $x \in W(d)$, we set
    \[ s(x) := \bigl[e, x, xm(x)s_d \bigr], \quad
       t(x) := \bigl[e, x, xm(x)t_d \bigr] \in C_2(F_2;\IC).
    \]
  \item Finally, the signs~$\varepsilon(\dots)$ are defined as follows:
    We set~$\varepsilon(\alpha) :=1$; for $d \in \IN_{>0}$
    and~$x \in W(d-1)$, we set
    \begin{align*}
      \varepsilon(m(x) s_d) & := - \varepsilon(x)
      \\
      \varepsilon(m(x) t_d) & := - \varepsilon(x)
      \\
      \varepsilon(xm(x) s_d) & := \varepsilon(x)
      \\
      \varepsilon(xm(x) t_d) & := \varepsilon(x).
    \end{align*}
  \end{itemize}
  By construction, all elements of~$W(d)$ consist of \emph{non-negative}
  powers of~$\alpha$ and $\beta$ and no cancellations occur in the
  definitions above. Therefore, $s$, $t$, and $\varepsilon$ are
  well-defined. Moreover, the construction of the edge sets~$W(d)$ is justified
  by the following observation: For each~$d \in \IN$ and each~$x \in W(d)$,
  we have
  \begin{align*}
    \partial \bigl( s(x) + t(x) \bigr)
    & = \partial \bigl( [e,x,xm(x)s_d] + [e,x,xm(x)t_d]\bigr)
    \\
    & = [e,m(x)s_d] - [e,xm(x)s_d] + [e,x]
      + [e,m(x)t_d] - [e,xm(x)t_d] + [e,x].
  \end{align*}

  In order to prove convergence of~$(b(D))_{D \in \IN}$ and $(\partial
  b(D))_{D \in \IN}$, we need to estimate the diameters of the
  simplices involved: For~$d \in \IN$ and $x \in W(d)$, we have
  \[ \diam s(x) \leq d(e,x) + 4 \cdot d,
  \quad
     \diam t(x) \leq d(e,x) + 4 \cdot d;
  \]
  inductively, we obtain for $x \in W(d)$
\[d(e,x) \in O(d^2)\]  
  and therefore 
  \[ \diam s(x),\quad \diam t(x) \in O(d^2).
  \]
  We now give the convergence arguments:
  \begin{itemize}
  \item The sequence~$(b(D))_{D \in \IN}$ is Cauchy with respect to~$\|-\|_{n,p}$: 
    Let $D, D' \in \IN$ with~$D' > D \geq 0$ and let $n \in \IN$. By construction, we
    have
    \begin{align*}
      b(D') - b(D)
      & = \sum_{d=D+1}^{D'} \sum_{x \in W(d)} \frac1{2^{d+1}} \cdot \varepsilon(x) \cdot \bigl(s(x) + t(x)\bigr).
    \end{align*}
    The markers/suffixes show that all of these $2$-simplices are different (so
    no cumulations of coefficients occur). Therefore,
    \begin{align*}
      \bigl\| b(D') - b(D) \bigr\|_{n,p}
      & \leq \sum_{d=D+1}^{D'} \frac{|W(d)|}{(2^{d+1})^p} \cdot O(d^{2n})
      \\
      & \leq \sum_{d=D+1}^{D'} \frac{4^d}{(2^{d+1})^p} \cdot O(d^{2n}).
    \end{align*}
    Because $p > 2$, the corresponding series on the right-hand side
    is convergent. Therefore, these differences between its partial
    sums form a Cauchy sequence.
  \item The sequence~$(\partial b(D))_{D \in \IN}$ is Cauchy with respect to~$\|-\|_{n,p}$: 
    Let $D ,D' \in \IN$ with~$D' > D \geq 0$ and let $n \in \IN$. By construction, we have
    \begin{align*}
      \partial b(D') - \partial b(D) 
      & = \sum_{d = D+1}^{D'} \sum_{x\in W(d)} \frac1{2^{d+1}} \cdot \varepsilon(x)
      \cdot \partial \bigl(s(x) + t(x)\bigr) 
      \\
      & = \sum_{x \in W(D+1)} \frac1{2^{D+1}} \cdot \varepsilon(x) \cdot [e,x] - \sum_{y \in W(D'+1)} \frac1{2^{D'+1}} \cdot \varepsilon(y) \cdot [e,y].
    \end{align*}
    The markers/suffixes show that all of these $1$-simplices are different (so
    no cumulations of coefficients occur). Therefore,
    \begin{align*}
      \bigl\| \partial b(D') - \partial b(D) \bigr\|_{n,p}
      & \leq \frac{|W(D' +1)|}{(2^{D'+1})^p} \cdot O(D'{}^{2n})
      + \frac{|W(D+1)|}{(2^{D+1})^p} \cdot O(D{}^{2n})
      \\
      & \leq \frac{4^{D'+1}}{(2^{D'+1})^p} \cdot O(D'{}^{2n})
      + \frac{4^{D+1}}{(2^{D+1})^p} \cdot O(D{}^{2n}).
    \end{align*}
    Because $p > 2$, these terms converge to~$0$ for~$D,D' \rightarrow \infty$.
  \end{itemize}

  Thus, we have established that $b = \lim_{D \rightarrow \infty} b(D) \in
  C^p_2(F_2)$ is a well-defined chain. By a similar computation as the previous one for~$\partial b(D') - \partial b(0)$, 
  we have
  \[ \partial b = [e,\alpha],
  \]
  as claimed.
\end{proof}

\bibliography{./poly_p}
\bibliographystyle{amsalpha}

\end{document}